\newcommand{\nc}{\newcommand}
\nc{\fg}{\mathfrak{f} } \nc{\vg}{\mathfrak{v} } \nc{\wg}{\mathfrak{w} }
\nc{\zg}{\mathfrak{z} } \nc{\ngo}{\mathfrak{n} } \nc{\kg}{\mathfrak{k} }
\nc{\mg}{\mathfrak{m} } \nc{\bg}{\mathfrak{b} } \nc{\ggo}{\mathfrak{g} }
\nc{\ggob}{\overline{\mathfrak{g}} } \nc{\sog}{\mathfrak{so} }
\nc{\sug}{\mathfrak{su} } \nc{\spg}{\mathfrak{sp} } \nc{\slg}{\mathfrak{sl} }
\nc{\glg}{\mathfrak{gl} } \nc{\cg}{\mathfrak{c} } \nc{\rg}{\mathfrak{r} }
\nc{\hg}{\mathfrak{h} } \nc{\tgo}{\mathfrak{t} } \nc{\ug}{\mathfrak{u} }
\nc{\dg}{\mathfrak{d} } \nc{\ag}{\mathfrak{a} } \nc{\pg}{\mathfrak{p} }
\nc{\sg}{\mathfrak{s} } \nc{\affg}{\mathfrak{aff} } \nc{\qg}{\mathfrak{q} }
\nc{\pca}{\mathcal{P}} \nc{\nca}{\mathcal{N}} \nc{\lca}{\mathcal{L}}
\nc{\oca}{\mathcal{O}} \nc{\mca}{\mathcal{M}} \nc{\tca}{\mathcal{T}}
\nc{\aca}{\mathcal{A}} \nc{\cca}{\mathcal{C}} \nc{\gca}{\mathcal{G}}
\nc{\sca}{\mathcal{S}} \nc{\hca}{\mathcal{H}} \nc{\bca}{\mathcal{B}}
\nc{\fca}{\mathcal{F}} \nc{\dca}{\mathcal{D}} \nc{\val}{\operatorname{val}}
\nc{\eca}{\mathcal{E}}
\nc{\vp}{\varphi} \nc{\ddt}{\tfrac{{\rm d}}{{\rm d}t}}
\nc{\dpar}{\tfrac{\partial}{\partial t}} \nc{\im}{\mathtt{i}}
\nc{\SO}{\mathrm{SO}} \nc{\Spe}{\mathrm{Sp}} \nc{\Sl}{\mathrm{SL}}
\nc{\SU}{\mathrm{SU}} \nc{\Or}{\mathrm{O}} \nc{\U}{\mathrm{U}} \nc{\Gl}{\mathrm{GL}}
\nc{\Se}{\mathrm{S}} \nc{\Cl}{\mathrm{Cl}} \nc{\Spein}{\mathrm{Spin}}
\nc{\Pin}{\mathrm{Pin}} \nc{\G}{\mathrm{GL}_n(\RR)} \nc{\g}{\mathfrak{gl}_n(\RR)}
\nc{\RR}{{\Bbb R}} \nc{\HH}{{\Bbb H}} \nc{\CC}{{\Bbb C}} \nc{\ZZ}{{\Bbb Z}}
\nc{\FF}{{\Bbb F}} \nc{\NN}{{\Bbb N}} \nc{\QQ}{{\Bbb Q}} \nc{\PP}{{\Bbb P}}
\nc{\SSS}{{\Bbb S}} \nc{\DD}{{\Bbb D}}
\nc{\vs}{\vspace{.2cm}} \nc{\vsp}{\vspace{1cm}} \nc{\ip}{\langle\cdot,\cdot\rangle}
\nc{\ipp}{(\cdot,\cdot)} \nc{\la}{\langle} \nc{\ra}{\rangle} \nc{\unm}{\tfrac{1}{2}}
\nc{\unc}{\tfrac{1}{4}} \nc{\und}{\tfrac{1}{16}} \nc{\no}{\vs\noindent}
\nc{\lamn}{\Lambda^2(\RR^n)^*\otimes\RR^n} \nc{\lamp}{\Lambda^2\pg^*\otimes\pg}
\nc{\lamg}{\Lambda^2\ggo^*\otimes\ggo} \nc{\lamngo}{\Lambda^2\ngo^*\otimes\ngo}
\nc{\tangz}{{\rm T}^{\rm Zar}} \nc{\mum}{/\!\!/} \nc{\kir}{/\!\!/\!\!/}
\nc{\Ri}{\tfrac{4\Ric_{\mu}}{||\mu||^2}} \nc{\ds}{\displaystyle}
\nc{\ben}{\begin{enumerate}} \nc{\een}{\end{enumerate}} \nc{\f}{\frac}
\nc{\lb}{[\cdot,\cdot]} \nc{\isn}{\tfrac{1}{||v||^2}}
\nc{\gkp}{(\ggo=\kg\oplus\pg,\ip)} \nc{\ukh}{(\ug=\kg\oplus\hg,\ip)}
\nc{\Hess}{\operatorname{Hess}} \nc{\ad}{\operatorname{ad}}
\nc{\Ad}{\operatorname{Ad}} \nc{\rank}{\operatorname{rank}}
\nc{\Irr}{\operatorname{Irr}} \nc{\End}{\operatorname{End}}
\nc{\Aut}{\operatorname{Aut}} \nc{\Inn}{\operatorname{Inn}}
\nc{\Der}{\operatorname{Der}} \nc{\Ker}{\operatorname{Ker}}
\nc{\Iso}{\operatorname{I}} \nc{\Diff}{\operatorname{Diff}}
\nc{\Lie}{\operatorname{L}} \nc{\tr}{\operatorname{tr}} \nc{\dif}{\operatorname{d}} \nc{\Hom}{\operatorname{Hom}}
\nc{\sen}{\operatorname{sen}} \nc{\modu}{\operatorname{mod}}
\nc{\Riem}{\operatorname{Rm}} \nc{\Ricci}{\operatorname{Ric}}
\nc{\sym}{\operatorname{sym}} \nc{\symac}{\operatorname{sym^{ac}}}
\nc{\symc}{\operatorname{sym^{c}}} \nc{\scalar}{\operatorname{sc}}
\nc{\grad}{\operatorname{grad}} \nc{\ricci}{\operatorname{ric}}
\nc{\nr}{\operatorname{nr}} \nc{\riccic}{\operatorname{ric^{c}}}
\nc{\riccig}{\operatorname{ric^{\gamma}}} \nc{\Rin}{\operatorname{M}}
\nc{\Le}{\operatorname{L}} \nc{\tang}{\operatorname{T}}
\nc{\level}{\operatorname{level}} \nc{\rad}{\operatorname{r}}
\nc{\abel}{\operatorname{ab}} \nc{\CH}{\operatorname{CH}}
\nc{\mcc}{\operatorname{mcc}} \nc{\Adj}{\operatorname{Adj}}
\nc{\Order}{\operatorname{O}}
\def\be{\begin{equation*}} \def\ee{\end{equation*}}
\def\bea{\begin{eqnarray*}} \def\eea{\end{eqnarray*}}
\def\bpr{\begin{proposition}} \def\epr{\end{proposition}}
\def\bsm{\begin{smallmatrix}} \def\esm{\end{smallmatrix}}
\nc{\Liealg}{\operatorname{Lie}} \nc{\diag}{\operatorname{diag}}
\nc{\Gr}{\operatorname{Gr}}
\nc{\so}{\mathfrak{so} }
\theoremstyle{plain}
\newtheorem{theorem}{Theorem}[section]
\newtheorem{proposition}[theorem]{Proposition}
\newtheorem{lemma}[theorem]{Lemma}
\newtheorem{problem}[theorem]{Problem}
\theoremstyle{definition}
\newtheorem{definition}[theorem]{Definition}
\theoremstyle{remark}
\newtheorem{remark}[theorem]{Remark}
\newtheorem{example}[theorem]{Example}
\title{Solsolitons associated with graphs}
\author{Ramiro A. Lafuente}
\address{FaMAF and CIEM, Universidad Nacional de C\'ordoba, C\'ordoba, Argentina}
\email{ramlaf@gmail.com}
\begin{document}

\maketitle
\pagestyle{myheadings}
\markboth{\textsc{Solsolitons associated with graphs}}{\textsc{Ramiro A. Lafuente}}

\begin{abstract}
We show how to associate with each graph with a certain property (\emph{positivity}) a family of simply connected solvable Lie groups endowed with left-invariant Riemannian metrics that are Ricci solitons (called \emph{solsolitons}). We classify them up to isometry, obtaining families depending on many parameters of explicit examples of Ricci solitons. A classification of graphs with up to 3 coherent components according to positivity is also given.
\end{abstract}



\section{Introduction}\label{intro}
An important conceptual contribution of Ricci flow theory has been the notion of Ricci solitons, which generalize Einstein metrics, and are precisely the fixed points of the flow up to scaling and pull-back by diffeomorphisms (see \cite[Ch. I]{libro} for further information).

In the homogeneous case, all known examples of nontrivial Ricci solitons are {\it solsolitons}, i.e. simply connected solvable Lie groups endowed with a left invariant metric $g$ whose Ricci operator satisfies $\Ricci(g)=cI+D$, for some $c\in\RR$ and derivation $D$ of the Lie algebra.  Such metrics are called {\it nilsolitons} in the nilpotent case, and have been extensively studied because of their interplay with Einstein solvmanifolds (see the survey \cite{cruzchica}). It was recently proved in \cite{solsolitons} that, up to isometry, any solsoliton can be obtained via a very simple
construction from a nilsoliton together with any abelian Lie algebra of
symmetric derivations of its metric Lie algebra. The following
uniqueness result is also obtained in \cite{solsolitons}: a given solvable Lie group can admit at most one
solsoliton up to isometry and scaling. In this way, solsolitons provide canonical metrics on solvable Lie groups, where Einstein metrics may not exist.  In \cite{wi}, a classification of solsolitons in dimension $\leq 7$ is given.

In this paper, we consider certain nilsolitons attached to graphs found in \cite{LauWill} and apply the construction mentioned above to obtain families, depending on many parameters, of explicit examples of solsolitons.  Given a graph $\gca$, a $2$-step nilpotent Lie algebra $\ngo_\gca$ can be naturally defined, which admits a nilsoliton metric if and only if $\gca$ is {\it positive}; namely,
a certain uniquely defined weighting on the graph is positive.  For example, graphs with at most five vertices (excepting only one), regular graphs and trees without any edge adjacent to four or more edges are positive (see \cite{LauWill}). We prove that for any positive graph $\gca$ with $p$ vertices and $q$ edges, the set of $(r+p+q)$-dimensional solsolitons associated to $\ngo_\gca$, up to isometry and scaling, is parameterized by the quotient
$$
\Gr_r(\RR^p)/\Aut(\gca),
$$
where $\Gr_r(\RR^p)$ is the Grassmannian of $r$-dimensional subspaces of $\RR^p$ and $\Aut(\gca)$ is the automorphism group of the graph $\gca$ acting naturally as permutations on $\RR^p$ (see Theorem \ref{TeoMain} for a more precise statement).  In this way, as $\Aut(\gca)$ is a finite group, one obtains a family depending on $r(p-r)$ parameters of solsolitons on each dimension $r+p+q$. There is a single element in this family which is Einstein.

In Section \ref{ejemplos}, we exhibit new examples of positive and nonpositive graphs by giving a complete classification for graphs having at most three coherent components.

\section{Preliminaries}\label{preli}

\vskip10pt

\subsection{Homogeneous Ricci solitons}

We begin by giving a brief review about Ricci solitons on homogeneous manifolds, following \cite{solsolitons}.

A complete Riemannian metric $g$ on a differentiable manifold $M$ is said to be a \emph{Ricci soliton} if its Ricci tensor $\Ricci(g)$ satisfies
\begin{equation}\label{Ricci_soliton}
\Ricci(g) = cg + \Le_X \, g, \,\,\,\,\,\,\, \hbox{for some } c\in \RR, \,\, X \in \chi(M) \hbox{ complete,}
\end{equation}
where $\Le_X$ is the usual Lie derivative, and $\chi(M)$ is the space of all differentiable vector fields on $M$.

We see that Ricci solitons are very natural generalizations of Einstein metrics (i.e. $\Ricci(g) = cg$). However, the main significance of the concept is that $g$ is a Ricci soliton if and only if the curve of metrics in $M$
\begin{equation}\label{curva_metricas}
g(t) = (-2ct + 1) \phi^*_t g,
\end{equation}
is a solution to the Ricci flow
\begin{equation}
\frac{\partial}{\partial t} g(t) = -2 \Ricci(g(t)),
\end{equation}
for some one-parameter group $\phi_t$ of diffeomorphisms of $M$.

In this work we will only be interested in \emph{homogeneous} Ricci solitons (i.e. those defined on homogeneous manifolds).
\begin{remark}
It is worth pointing out that even Einstein metrics on homogeneous manifolds are still not completely understood. For a recent account of the theory we refer the reader to \cite{cruzchica}.
\end{remark}

As we have mentioned, all known examples so far of nontrivial homogeneous Ricci solitons are isometric to a left-invariant metric $g$ on a simply connected Lie group $G$, and can be obtained in the following way: we identify $g$ with an inner product on the Lie algebra $\ggo$ of $G$, and let us suppose that
\begin{equation}\label{cond_soliton}
\Ricci(g) = c I + D, \,\,\,\,\,\,\,\hbox{for some} \,\, c\in \RR, \,\, D \in \Der(\ggo),
\end{equation}
where $Ric(g)$ here also denotes the Ricci operator of $g$ (i.e. $\Ricci(g) = g(\Ricci(g) \cdot, \cdot)$). Then $g$ is a Ricci soliton (see \cite{solsolitons} for more details).

\begin{remark}
It is an open question whether every left-invariant Ricci soliton metric satisfies (\ref{cond_soliton}). And concerning existence, up to now all nontrivial examples are on solvable Lie groups.
\end{remark}

\begin{definition}\cite{solsolitons}
A left-invariant metric $g$ on a simply connected solvable (nilpotent) Lie group is called a \emph{solsoliton} (\emph{nilsoliton}) if the corresponding Ricci operator satisfies (\ref{cond_soliton}).
\end{definition}
We will usually identify such a metric with the corresponding inner product on the Lie algebra of the group.

Recall that a simply connected Lie group endowed with a left-invariant Riemannian metric is called a \emph{solvmanifold} if it is solvable, and \emph{nilmanifold} if it is nilpotent.

\vskip15pt

\subsection{Solsolitons built up from nilsolitons}\label{sols from nils}

The following construction from \cite{solsolitons} shows how, in a very natural way, solsolitons can be obtained from a nilsoliton.
\begin{proposition}\cite{solsolitons}\label{constr_solsolitons}
Let $(\ngo,\ip_1)$ be a nilsoliton Lie algebra, say with Ricci operator $\Ricci_1=cI+D_1$,
$c<0$, $D_1\in\Der(\ngo)$, and consider $\ag$ any abelian Lie algebra of $\ip_1$-symmetric
derivations of $(\ngo,\ip_1)$.  Then the solvmanifold $S$ with Lie algebra
$\sg=\ag\oplus\ngo$ (semidirect product) and inner product given by
$$
\ip|_{\ngo\times\ngo}=\ip_1, \qquad \la\ag,\ngo\ra=0, \qquad \la
A,A\ra=-\tfrac{1}{c}\tr{A^2}, \,\,\forall A\in\ag,
$$
is a solsoliton, with $\Ricci=cI+D$, for some $D\in\Der(\sg)$.
\end{proposition}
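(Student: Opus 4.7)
The plan is to compute the Ricci operator $\Ricci$ of $(\sg,\ip)$ directly from the standard formula for the Ricci tensor of a left-invariant metric on a Lie group, and then verify that $\Ricci - cI_\sg$ is a derivation of $\sg$. Fix orthonormal bases $\{A_1,\ldots,A_s\}$ of $(\ag,\ip|_\ag)$ and $\{X_1,\ldots,X_n\}$ of $(\ngo,\ip_1)$. Using the three structural hypotheses -- $\ag$ abelian, $\ag\perp\ngo$, and each $\ad A|_\ngo$ being $\ip_1$-symmetric -- I would first record that the mean curvature vector $H$ (defined by $\la H,X\ra = \tr(\ad X)$) lies in $\ag$ with $\la H,A\ra = \tr(\ad A|_\ngo)$, and that the off-diagonal blocks of $\Ricci$ in the decomposition $\sg=\ag\oplus\ngo$ vanish. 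The vanishing $\ricci(A,X)=0$ for $A\in\ag$, $X\in\ngo$ reduces after expansion to the identity $\tr(\ad A\cdot\ad X|_\ngo)=0$; this follows from a weight-space argument: decomposing $\ngo$ into $\ad A$-eigenspaces, $\ad X$ sends each eigenspace into a shifted one, and the trace of the nilpotent inner derivation $\ad X|_\ngo$ restricted to each homogeneous summand vanishes.

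The same formula yields the two diagonal blocks,
\[
\la \Ricci A,A'\ra = -\tr(\ad A|_\ngo\cdot\ad A'|_\ngo), \qquad \Ricci|_\ngo = \Ricci_1 - \ad H|_\ngo,
\]
where $\Ricci_1$ is the Ricci operator of $(\ngo,\ip_1)$; on $\ngo$, the contributions $\pm\tfrac12\sum_k (\ad A_k|_\ngo)^2$ coming from the two quadratic terms of the Ricci formula cancel, a cancellation that crucially uses the $\ip_1$-symmetry of each $\ad A_k$. Polarizing the prescribed normalization to $\la A,A'\ra = -\tfrac{1}{c}\tr(\ad A|_\ngo\cdot\ad A'|_\ngo)$, the $\ag$-block becomes $\Ricci|_\ag = c\,I_\ag$; substituting $\Ricci_1 = cI + D_1$ in the $\ngo$-block, one reads off
\[
\Ricci = c\,I_\sg + D, \qquad D|_\ag = 0, \qquad D|_\ngo = D_1 - \ad H|_\ngo.
\]

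It remains to verify $D\in\Der(\sg)$. Since $D$ preserves the decomposition and $\ag$ is abelian, the derivation identity reduces to two points. First, $D|_\ngo$ must be a derivation of $\ngo$; this is immediate because $D_1\in\Der(\ngo)$ by hypothesis and $\ad H|_\ngo$ is a derivation of $\ngo$ (as $H\in\ag$). Second, the mixed compatibility $D[A,X] = [DA,X] + [A,DX]$ for $A\in\ag$, $X\in\ngo$ reduces, using $DA=0$, to the requirement $[D|_\ngo,\ad A|_\ngo]=0$; the term $[\ad H|_\ngo,\ad A|_\ngo]$ vanishes because $H,A\in\ag$ and $\ag$ is abelian, so the remaining requirement is $[D_1,\ad A|_\ngo]=0$ for every $A\in\ag$. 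I expect this to be the main obstacle: it amounts to the fact that the nilsoliton derivation $D_1$ commutes with every $\ip_1$-symmetric derivation of $(\ngo,\ip_1)$, a property of nilsolitons that stems from the variational/moment-map characterization of $D_1$ established in \cite{solsolitons}, together with the uniqueness of the nilsoliton metric on $\ngo$ up to scaling and isometry.
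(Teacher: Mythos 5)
Two preliminary remarks: this proposition is quoted by the paper from \cite{solsolitons} and is not reproved there, so the comparison is with the standard argument; and the bulk of your computation is that standard argument, done correctly. The identification of the mean curvature vector $H\in\ag$, the vanishing of the mixed block via $\tr(A\,\ad X|_\ngo)=0$ (your weight-space argument is fine), the cancellation of the two $\pm\tfrac12\sum_k(\ad A_k|_\ngo)^2$ contributions (which does use the symmetry of the $\ad A_k$), the formulas $\la\Ricci A,A'\ra=-\tr(\ad A|_\ngo\,\ad A'|_\ngo)$ and $\Ricci|_\ngo=\Ricci_1-\ad H|_\ngo$, and the conclusion $\Ricci|_\ag=cI_\ag$ from the prescribed normalization are all correct.

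The genuine gap is the step you yourself flag at the end: you need $[D_1,\ad A|_\ngo]=0$ for every $A\in\ag$, and you only gesture at ``the variational/moment-map characterization together with the uniqueness of the nilsoliton metric''. Uniqueness is not what is needed and does not obviously yield commutation, so as written the decisive point of the proof is missing. Moreover, no nilsoliton property is needed at all: the Ricci operator of \emph{any} nilpotent metric Lie algebra commutes with every symmetric derivation. Indeed, with $\{e_i\}$ an $\ip_1$-orthonormal basis of $\ngo$ one has
$$
\Ricci_1=-\tfrac12\sum_i(\ad e_i)^t\ad e_i+\tfrac14\sum_i\ad e_i(\ad e_i)^t,
$$
and if $A=A^t\in\Der(\ngo)$ then $[A,\ad X]=\ad(AX)$ and $[A,(\ad X)^t]=-(\ad(AX))^t$; writing $Ae_i=\sum_j A_{ij}e_j$ with $A_{ij}=A_{ji}$, the commutator terms in each of the two sums cancel pairwise, so $[A,\Ricci_1]=0$ and hence $[A,D_1]=[A,\Ricci_1-cI]=0$. (Equivalently, this is the infinitesimal orthogonal equivariance of the moment map applied to the symmetric part of the stabilizer of the bracket, which is presumably what you had in mind; but it must be stated and proved, not attributed to uniqueness.) With this lemma supplied, your $D$ defined by $D|_\ag=0$, $D|_\ngo=D_1-\ad H|_\ngo$ is indeed a derivation of $\sg$, and the proof closes.
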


From now on, when there is no ambiguity, we will speak about nilsolitons or solsolitons instead of nilmanifolds or solvmanifolds, often referring to the Lie algebra of the group endowed with the corresponding inner product. We see that the construction above may lead to different solsolitons, depending on the algebra of derivations $\ag$ chosen. Two questions arise naturally at this point: whether every solsoliton can be constructed in this way, and when two of them are isometric.

The answer to the first question is known to be affirmative. The following result is given as a corollary to the structure theorem on solsolitons \cite[Theorem 4.8]{solsolitons}:
\begin{theorem}\cite{solsolitons}
Up to isometry, any solsoliton can be constructed as in Proposition \ref{constr_solsolitons}.
\end{theorem}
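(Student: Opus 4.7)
The plan is to deduce this statement directly from the structure theorem for solsolitons (Theorem 4.8 of \cite{solsolitons}), by extracting from it exactly the ingredients required by Proposition \ref{constr_solsolitons}.

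Starting from a solsoliton $(\sg,\ip)$ with $\Ricci = cI + D$ (the case $c=0$ being flat and trivial; otherwise $c<0$), the structure theorem should provide an orthogonal decomposition $\sg = \ag \oplus \ngo$, where $\ngo$ is the nilradical of $\sg$, the subspace $\ag$ is an abelian subalgebra whose elements act on $\ngo$ by $\ip$-symmetric derivations, and the restriction of $\ip$ to $\ngo\times\ngo$ is a nilsoliton inner product on $\ngo$.

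These are precisely the data required by Proposition \ref{constr_solsolitons}. Feeding the nilsoliton $(\ngo,\ip|_{\ngo\times\ngo})$ and the algebra $\ag$ of symmetric derivations into that construction produces a solsoliton inner product $\ip'$ on $\sg$ which agrees with $\ip$ on $\ngo$, makes $\ag$ orthogonal to $\ngo$, and satisfies $\la A,A\ra' = -\tfrac{1}{c}\tr(A^2)$ for all $A\in\ag$. To conclude that $(\sg,\ip)$ is isometric to $(\sg,\ip')$, I would invoke the uniqueness statement cited in the introduction: a simply connected solvable Lie group admits at most one solsoliton metric up to isometry and scaling. Since both inner products give solsoliton metrics on the same Lie group, and since the Ricci constant $c$ appearing in the formula $-\tfrac{1}{c}\tr(A^2)$ matches, the two must be isometric without any need to rescale.

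The main obstacle is the extraction of the structural decomposition itself. Proving that the nilradical inherits a nilsoliton metric, that its orthogonal complement is an abelian subalgebra, and that this complement acts on the nilradical by $\ip$-symmetric derivations — this is the substantive content of Theorem 4.8 of \cite{solsolitons}, and the part one would need to invoke as a black box. Granting that structural theorem, the passage to the present corollary reduces to matching data and appealing to uniqueness, as above.
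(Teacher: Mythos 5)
You give no more than the paper does here: the paper records this statement as a corollary of the structure theorem \cite[Theorem 4.8]{solsolitons} without proof, and your plan is the same in spirit, but your rendering of that black box is stronger than what the theorem actually provides, and the discrepancy is exactly where the real work lies. Theorem 4.8 yields that $\ag=\ngo^{\perp}$ is abelian, that $(\ngo,\ip|_{\ngo\times\ngo})$ is a nilsoliton with the same constant $c$, and that each $\ad A|_{\ngo}$, $A\in\ag$, is a \emph{normal} operator --- not a symmetric one. So the data you extract are not yet the data required by Proposition \ref{constr_solsolitons}. The missing step is to replace each $\ad A|_{\ngo}$ by its symmetric part: one must know that for a nilsoliton these symmetric parts are again derivations and pairwise commute (a lemma in \cite{solsolitons}), and, crucially, that the solvable Lie algebra obtained by redefining the action of $\ag$ on $\ngo$ through these symmetric parts --- which is in general \emph{not isomorphic} to the original $\sg$ --- carries a metric isometric to the original one (the skew-symmetric parts are removed by a modification-type isometry). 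That passage is the substantive content of the corollary, and it is absent from your outline.

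This also undercuts your closing appeal to the uniqueness theorem. Uniqueness compares two solsoliton metrics on the \emph{same} solvable Lie group, but after symmetrizing, the constructed solsoliton generally lives on a different group, so the argument does not apply as stated; moreover, in \cite{solsolitons} uniqueness is itself obtained from the structure theory, so invoking it here risks circularity. It is also unnecessary: Theorem 4.8 already pins down the inner product on $\ag$ as $\la A,A\ra=-\tfrac{1}{c}\tr\big(\sym(\ad A|_{\ngo})\big)^{2}$, so once the symmetrization and the accompanying isometry are in place, the constructed metric agrees with the given one and no uniqueness result is needed. (Your dismissal of the $c=0$ case as flat is fine.)
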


Concerning the second question, we cite the following proposition, which gives us the equivalence relation to be considered in order to study solsolitons up to isometry.
\begin{proposition}\cite{solsolitons}\label{EquivSolsolitons}
Let $(\ngo, \la \cdot, \cdot \ra_1 ) $ be a nilsoliton and let us consider two solsolitons $S$ and $S'$ constructed as in Proposition \ref{constr_solsolitons} for abelian Lie algebras
\be
\ag, \ag' \subseteq \Der(\ngo)\cap \sym(\ngo, \ip_1 ),
\ee
respectively. Then $S$ is isometric to $S'$ if and only if there exists $h \in \Aut(\ngo)\cap \Or(\ngo, \ip_1 )$ such that $\ag' = h \ag h^{-1}$.
\end{proposition}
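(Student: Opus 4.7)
The plan is to address the two directions separately, with most of the real work landing in the ``only if'' direction via a rigidity result.

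For the ``if'' direction, suppose $h\in\Aut(\ngo)\cap\Or(\ngo,\ip_1)$ with $\ag'=h\ag h^{-1}$. I would define
\[
\tilde h\colon \sg=\ag\oplus\ngo \;\longrightarrow\; \sg'=\ag'\oplus\ngo, \qquad \tilde h(A+X) = hAh^{-1} + hX,
\]
and verify it has all the required properties. For the bracket, using that $\ag,\ag'$ are abelian one has $[A+X,A'+X']_\sg = A(X')-A'(X)+[X,X']_\ngo$, and $\tilde h$ of this equals $hA(X')-hA'(X)+h[X,X']_\ngo$; on the other side the semidirect bracket in $\sg'$ yields $(hAh^{-1})(hX')-(hA'h^{-1})(hX)+[hX,hX']_\ngo$, which matches because $h\in\Aut(\ngo)$. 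For the inner product, $h|_\ngo\in\Or(\ngo,\ip_1)$ handles the $\ngo$-part, the orthogonality $\la\ag,\ngo\ra=0$ is preserved trivially, and on $\ag$ the identity $\la hAh^{-1},hA'h^{-1}\ra_{\ag'} = -\tfrac1c\tr(hAA'h^{-1}) = -\tfrac1c\tr(AA') = \la A,A'\ra_\ag$ comes from $\tr$-invariance under conjugation. Since both groups are simply connected, $\tilde h$ integrates to a Riemannian isometry.

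For the ``only if'' direction, given an isometry $\phi\colon S\to S'$ I would first compose with a left translation so that $\phi$ fixes the identities, and then argue that $d\phi_e\colon\sg\to\sg'$ is simultaneously a Lie algebra isomorphism and a linear isometry that respects the decompositions $\ag\oplus\ngo$ and $\ag'\oplus\ngo$. This requires invoking the structure/rigidity results of \cite{solsolitons}: the constructions of Proposition \ref{constr_solsolitons} produce solvmanifolds in which $\ngo$ is the nilradical, so it is an algebraic invariant and is mapped to $\ngo$ by any Lie algebra isomorphism; and $\ag,\ag'$ are then forced to match as the $\ip$-orthogonal complements. Once this is in place, writing $h:=d\phi_e|_\ngo$ and $h_0:=d\phi_e|_\ag$, one has $h\in\Aut(\ngo)\cap\Or(\ngo,\ip_1)$ and $h_0\colon\ag\to\ag'$ a linear isometry. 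The compatibility $d\phi_e([A,X])=[d\phi_e(A),d\phi_e(X)]$ for $A\in\ag$, $X\in\ngo$ unwinds to $h(AX)=h_0(A)(hX)$, i.e.\ $h_0(A)=hAh^{-1}$, and therefore $\ag'=h_0(\ag)=h\ag h^{-1}$.

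The main obstacle is the rigidity step just above: isometries between left-invariant metrics on Lie groups are not automatically group homomorphisms, so one cannot proceed purely formally. The work lies in using the canonical form for solsolitons from \cite{solsolitons} (together with the fact that the nilradical is preserved by any such isometry after fixing a base point) to reduce to the case in which $d\phi_e$ is a Lie algebra isomorphism preserving the $\ag$-$\ngo$ splitting. Once this is granted, the rest is a short unwinding of the semidirect-product bracket, essentially reverse to the check done for the ``if'' direction.
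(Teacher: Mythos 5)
Your ``if'' direction is correct and essentially complete: conjugation-invariance of the trace and $h|_\ngo\in\Aut(\ngo)\cap\Or(\ngo,\ip_1)$ do show that $\tilde h(A+X)=hAh^{-1}+hX$ is an isomorphism of metric Lie algebras, hence induces an isometry of the simply connected groups. (For the record, the paper itself offers no proof of this proposition; it is quoted from \cite{solsolitons}, so the only comparison available is with the argument there.)

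The ``only if'' direction, however, contains a genuine gap, located exactly where you yourself flag ``the main obstacle'' and then do not resolve it. After translating so that the isometry $\phi$ fixes the identities, you assert that $d\phi_e$ is a Lie algebra isomorphism respecting the splittings $\ag\oplus\ngo$ and $\ag'\oplus\ngo$. This is not a formal consequence of anything you state: there exist isometric, non-isomorphic simply connected solvable Lie groups with left-invariant metrics (already among flat solvmanifolds), so neither ``$\ngo$ is the nilradical, hence an algebraic invariant'' nor the orthogonality of the splitting can produce the bracket-preservation of $d\phi_e$ --- those facts only become usable \emph{after} one knows $d\phi_e$ respects brackets. The missing input is a specific rigidity theorem: since $\ag$ and $\ag'$ consist of $\ip_1$-symmetric, hence real-semisimple, derivations, the algebras $\sg$ and $\sg'$ are completely solvable, and by the results of Alekseevskii and Gordon--Wilson on isometry groups of completely solvable solvmanifolds (this is the route taken in \cite{solsolitons}) an isometry between such groups can be corrected so that its differential at the identity is an isomorphism of Lie algebras. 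Your appeal to ``the canonical form for solsolitons from \cite{solsolitons}'' is both too vague to certify this step and circular in spirit, since the proposition being proved is itself one of the isometry statements of that paper. A secondary, smaller omission: the claim that $\ngo$ is the nilradical of $\sg$ also needs an argument --- e.g., $\RR A+\ad\,\ngo\subseteq\glg(\ngo)$ is solvable, so it can be simultaneously triangularized over $\CC$; if $A+\ad X$ were nilpotent with $A$ symmetric, all eigenvalues of $A$ would vanish, forcing $A=0$. Once these two points are supplied, the final unwinding $h_0(A)=hAh^{-1}$ and $\ag'=h\ag h^{-1}$ that you give is fine.
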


\vskip15pt

\subsection{The Lie algebra associated with a graph}
Here we show how to associate a Lie algebra to a graph, and we describe conditions on the graph such that the resulting Lie algebra admits a nilsoliton inner product.

Let $\gca = (S,E)$ be a (finite, undirected) graph, with set of vertices $S = \{v_1, \ldots ,v_p  \}$ and edges $E = \{l_1, \ldots, l_q \}$, $l_k = v_i v_j$ for some $i,j$. We associate to it the Lie algebra $\ngo_{\gca} = (\RR^n, [ \cdot , \cdot ] )$, $n = p+q$, with
\be
[e_i,e_j] = \left\{
              \begin{array}{ll}
                e_{p+k}, & \hbox{if $l_k = v_i v_j$, $i<j$;} \\
                0, & \hbox{otherwise $(i<j)$,}
              \end{array}
            \right.
\ee
where $\{e_i\}_{i=1}^n$ is the standard basis for $\RR^n$. We will often identify the vertices of the graph with the vectors $e_i$'s, $1\leq i \leq p$, and the edges with the $e_k$'s, $p+1\leq k \leq p+q$. Then the bracket between two vertices $v_i$ and $v_j$ with $i<j$ is the edge that joins them, if it exists, and zero otherwise. To obtain a well defined bracket we add the assumption that no two edges join the same pair of vertex.

This construction was first considered in \cite{DM}, where the automorphisms group of the Lie algebra is studied, and then in \cite{LauWill}, where necessary and sufficient conditions for the Lie algebra to admit a nilsoliton inner-product are obtained.

The Lie algebra $\ngo_{\gca}$ so obtained is clearly 2-step nilpotent, and it was proved by M. Mainkar that it is closely related to the starting graph, in the sense that two $\ngo_{\gca}$ of those are isomorphic as Lie algebras if and only if the corresponding graphs are isomorphic as graphs.

\begin{remark}
The mapping that takes graphs onto 2-step nilpotent Lie algebras is not surjective: there are for instance continuous families in the space of 2-step nilpotent Lie algebras up to isomorphism (starting with dimension $9$), whereas there are only finitely many of the graph-kind in each dimension.
\end{remark}

The following result describes the condition that a graph Lie algebra must satisfy to admit a nilsoliton inner product, in terms of the graph.
\begin{proposition}\cite{LauWill}
 $\ngo_{\gca}$ admits a nilsoliton inner product if and only if there exist weights $c_1, \ldots, c_q \in \RR$ for the edges such that
\begin{eqnarray*}
 3 c_k + \sum_{l_m \sim l_k} c_m = \nu, \,\,\,\,\, & &\forall k = 1,\ldots, q, \\
 c_k > 0, \,\,\,\,\,& & \forall k = 1,\ldots, q,
\end{eqnarray*}
where the first sum is over all edges $l_m$ that share a vertex with $l_k$, and $\nu >0$ is any real number.
\end{proposition}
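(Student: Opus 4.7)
The plan is to reduce the nilsoliton equation to a linear condition on weights by an explicit Ricci computation on a parameterized family of ``diagonal'' inner products. For each $c=(c_1,\dots,c_q)\in\RR_{>0}^q$, consider the inner product $\ip_c$ on $\ngo_\gca$ in which $\{e_1,\dots,e_p\}$ is orthonormal, $\{e_{p+k}\}_{k=1}^q$ is orthogonal with $\|e_{p+k}\|^2=1/c_k$, and $V=\mathrm{span}\{e_1,\dots,e_p\}$ is perpendicular to $Z=\mathrm{span}\{e_{p+k}\}$. Via the isomorphism $e_{p+k}\mapsto c_k^{-1/2}e_{p+k}$, this is equivalent to the standard inner product on $\RR^n$ together with the rescaled bracket $[e_i,e_j]=\sqrt{c_k}\,e_{p+k}$ whenever $l_k=v_iv_j$, $i<j$. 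Using the standard formulas $\Ricci|_V=-\tfrac12\sum_k J_{Z_k}^T J_{Z_k}$ and $\Ricci(Z_k,Z_l)=\tfrac14\tr(J_{Z_k}J_{Z_l}^T)$ for $2$-step nilpotent metric Lie algebras, a direct calculation with $J_{e_{p+k}}=\sqrt{c_k}(E_{j_ki_k}-E_{i_kj_k})$ (where $l_k=v_{i_k}v_{j_k}$) shows that $\Ricci$ is diagonal in the basis $\{e_i\}_{i=1}^{p+q}$ with
\[
\Ricci(e_i,e_i)=-\tfrac12\sum_{l_m\ni v_i}c_m\ \ (1\le i\le p),\qquad \Ricci(e_{p+k},e_{p+k})=\tfrac{c_k}{2}.
\]

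Next, a diagonal operator with eigenvalues $d_i$ on vertices and $D_k$ on edges is a derivation of $\ngo_\gca$ if and only if $D_k=d_i+d_j$ whenever $l_k=v_iv_j$, since the only nontrivial brackets among basis vectors are the edges. Substituting the diagonal entries of $D=\Ricci-cI$ from above and setting $\nu:=-2c$, the derivation condition becomes precisely $3c_k+\sum_{l_m\sim l_k}c_m=\nu$ for every $k$. This gives the ``if'' direction: positive weights solving the system produce the nilsoliton inner product $\ip_c$ with Ricci eigenvalue $c=-\nu/2<0$.

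For the converse, I would invoke uniqueness of the nilsoliton metric on a given nilpotent Lie algebra up to isometry and scaling (from \cite{solsolitons}) together with Dani--Mainkar's structure result for $\Aut(\ngo_\gca)$ \cite{DM}, whose maximal torus contains the diagonal group acting on $\{e_i\}$. These allow one to conjugate any nilsoliton metric on $\ngo_\gca$ into the diagonal family $\ip_c$, from which the Ricci computation above extracts positive weights satisfying the stated system with $\nu=-2c>0$ (here $c<0$ is a standard fact for nilsolitons on nontrivial nilpotent Lie algebras). The Ricci calculation is routine; the main obstacle is precisely this reduction to diagonal form in the ``only if'' direction, which relies on the combination of nilsoliton uniqueness and the explicit structure of $\Aut(\ngo_\gca)$.
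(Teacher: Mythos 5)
You are being judged here against \cite{LauWill}, not against the paper: this proposition is only quoted, with no proof given in the text. Your sufficiency direction is correct in substance and is the standard computation. With the standard inner product and the rescaled bracket $[e_i,e_j]=\sqrt{c_k}\,e_{p+k}$, the Ricci operator is indeed diagonal with the entries you state (distinct edges never join the same pair of vertices, so the $J$'s are ``orthogonal'' to each other), and requiring $\Ricci-cI$ to be a diagonal derivation is exactly $3c_k+\sum_{l_m\sim l_k}c_m=-2c$ for every edge; positive weights with $\nu>0$ therefore yield a nilsoliton with $c=-\nu/2<0$. One normalization slip: the identification you assert is inverted. Transporting the inner product with $\|e_{p+k}\|^2=1/c_k$ through $e_{p+k}\mapsto c_k^{-1/2}e_{p+k}$ gives the bracket $[e_i,e_j]=c_k^{-1/2}e_{p+k}$, not $\sqrt{c_k}\,e_{p+k}$; in the fixed-bracket picture the nilsoliton your computation actually produces is the inner product with $\|e_{p+k}\|^2=c_k$. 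This does not affect the existence claim, but the normalization should be fixed.

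The genuine gap is in the ``only if'' direction. Uniqueness of the nilsoliton up to isometry and scaling compares two nilsoliton metrics, and here you have only one: there is no diagonal nilsoliton available to compare with unless the system (\ref{positivo}) already has a positive solution, which is precisely what you must prove. Similarly, the Dani--Mainkar description of $\Aut(\ngo_{\gca})$ only tells you that the diagonal operators form a torus inside $\Aut(\ngo_{\gca})$; it does not imply that an arbitrary nilsoliton inner product can be conjugated by an automorphism into one making the canonical basis orthogonal. The automorphism group is far too small to normalize arbitrary inner products, and for nilsoliton metrics this diagonalization step is exactly the nontrivial content of the theorem, so as written your converse restates what is to be shown. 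In \cite{LauWill} (and in Nikolayevsky's nice-basis criterion) necessity is obtained by different means: nilsolitons are characterized variationally/via GIT as critical points of the square norm of the moment map on the variety of nilpotent brackets, and one restricts to the natural diagonal torus action and uses a convexity (closed-orbit) argument to extract a positive solution of the linear system. Some argument of that kind, or an explicit analysis showing a nilsoliton metric on $\ngo_{\gca}$ may be assumed to make the canonical basis orthogonal, is needed to close the converse.
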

A graph satisfying this property is called \emph{positive}. If we consider the line graph $L(\gca)$ of $\gca$, the first condition above may be written in terms of its adjacency matrix $\Adj L(\gca)$ as
\begin{equation}\label{positivo}
(3I + \Adj L(\gca)) \left[ \begin{smallmatrix} c_1\\ \vdots \\ c_q  \end{smallmatrix} \right] =
\nu \left[ \begin{smallmatrix} 1\\ \vdots \\ 1  \end{smallmatrix} \right]
\end{equation}
It can be proved that the matrix $3I + \Adj L(\gca)$ is positive definite, thus given $\nu >0$ the above system has always a unique solution. And since $\nu>0$, the condition about the positivity of the $c_j$ is just to say that the following vector
\be
(3I + \Adj L(\gca))^{-1} \left[ \begin{smallmatrix} 1\\ \vdots \\ 1  \end{smallmatrix} \right]
\ee
has positive entries.

Let us see now, for further reference, how to calculate the nilsoliton inner product on $\ngo_{\gca}$. Let $(c_k)$ be the solution to (\ref{positivo}) with  $c_k>0$ $\forall k$ and $\sum_{k=1}^q c_k = 1$. Let us define the operator $g$ in $\ngo_{\gca}$ whose matrix in the standard basis is
\begin{equation}\label{def_g}
g =
\left[
  \begin{array}{cc}
    I_p & 0 \\
    0 & C \\
  \end{array}
\right], \,\,\,\, \hbox{  }
C =
\left[
  \bsm
    \sqrt{c_1}  &   & 0\\
       & \ddots & \\
    0  &   & \sqrt{c_q} \\
  \esm
\right] \in \RR^{q\times q}, \,\,\,
I_p =
\left[
  \bsm
    1  &   & 0\\
       & \ddots & \\
    0  &   & 1 \\
  \esm
\right]
\in \RR^{p\times p}.
\end{equation}
Then the nilsoliton inner product $\ip_1$ on $\ngo_{\gca}$ is given by the formula
\begin{equation}\label{pinilsoliton}
\la z, w \ra_1 = \la g^{-1} z, g^{-1}w \ra, \quad \forall z,w \in \ngo_{\gca}
\end{equation}
where $\ip$ is the canonical inner product on $\RR^n$; see \cite{LauWill} for further details.

The standard basis $e_1, \ldots, e_n$ is orthogonal (although not necesarily orthonormal) with respect to this new inner product. If $V$ is the vector space generated by the vertices of $\gca$ (i.e. by $e_1, \ldots, e_p$), we can easily see that the nilsoliton restricted to $V\times V$ is precisely the canonical inner product there. From now on, for every subspace of $\ngo_{\gca}$ we will call $\ip$ the canonical inner product there, to distinguish from the restriction of $\ip_1$ to that subspace.

\begin{remark}\label{auto_grafo_preserva}
From the uniqueness of the nilsoliton weights, it is easy to see that if $\sigma : \gca \rightarrow \gca$ is a graph automorphism of $\gca$, then the weights that make the graph positive are invariant by $\sigma$ (i.e. if $l_k = v_i v_j$ and $l_m = \sigma(v_i) \sigma(v_j)$, then $c_k = c_m$).
\end{remark}

\vskip15pt

\subsection{Coherent decomposition of a graph}\label{CompCoherentes}
Here we introduce the notion of coherent components of a graph, which will be very useful when studying the Lie algebra associated with it. Let $\gca = (S,E)$ be a graph, and let us define for each $\alpha \in S$
\be \Omega'(\alpha) = \{ \omega \in S : \omega \alpha \in E \} \,\,\, \hbox{ and } \,\,\, \Omega(\alpha) = \Omega'(\alpha) \cup \{\alpha\}.  \ee
Now consider the equivalence relation $\sim$ in  $S$ defined as follows:
\be \alpha \sim \beta  \,\,\,\, \hbox{ if and only if }  \,\,\,\, \Omega'(\alpha) \subseteq \Omega(\beta) \hbox{ and } \Omega'(\beta) \subseteq \Omega(\alpha),  \ee
i.e. two vertices are related by $\sim$ if and only if they have 'the same neighbors'. Let $\Lambda = \Lambda(S,E)$ be the set of equivalence classes in $S$ with respect to $\sim$; for each $\lambda\in \Lambda$ we call $S_{\lambda} \subseteq S$ its equivalence class. The subsets $S_{\lambda}, \lambda \in \Lambda$, are the \emph{coherent components} of $(S,E)$; they form a partition of the set $S$.

This decomposition was considered in \cite{DM}, where the following properties are also mentioned:

$\bullet$ If the graph $(S,E)$ is not connected and has no isolated vertices, its coherent components are just the set of the coherent components from each of its connected components. On the other hand, if it does have isolated vertices, all of them conform one coherent component, and the rest of the graph is decomposed as above.

$\bullet$ Given $\gca = (S,E)$, with $S_{\lambda}, \lambda \in \Lambda$ its coherent components, it is easy to see that if for a given $\lambda\in \Lambda$ there exist $\alpha, \beta \in S_{\lambda}$ such that $\alpha \beta \in E$, then $\xi \eta \in E$ $\forall \xi, \eta \in S_{\lambda}$. This implies that a coherent component is on its own either a complete graph or a discrete one.

$\bullet$ To generalize the previous item let us assume that, given $\lambda, \mu \in \Lambda$, there exist $\alpha \in S_{\lambda}$ and $\beta \in S_{\mu}$ such that $\alpha \beta \in E$. Then it is easy to see that $\xi \eta \in E$ for all $\xi \in S_{\lambda}$, $\eta \in S_{\mu}$. Therefore, given two coherent components $S_{\lambda}$ and $S_{\mu}$, either they are not adjacent at all, or every possible edge between them is present in $E$. To sum up, let us define a set of unordered pairs $\eca$ in such a way that $\lambda \mu \in \eca$ if and only if the components $S_{\lambda}$ and $S_{\mu}$ are 'adjacent'. We call $(\Lambda, \eca)$ the \emph{coherence graph} associated with $(S,E)$. Let $\Lambda_0 \subseteq \Lambda$ be the subset of coherent components $S_{\lambda}$ such that $(S,E)$ restricted to it is a complete graph, and furthermore let us consider a set $M = \{ m_{\lambda} : \lambda \in \Lambda\} \subseteq \NN$. From the properties mentioned above it is clear that the coherence graph $(\Lambda, \eca)$ together with $\Lambda_0$ and $M$ fully determine (up to isomorphism) the graph $(S,E)$: given $\alpha, \beta \in S$, say $\alpha \in S_{\lambda}$, $\beta \in S_{\mu}$, $\alpha \beta \in E$ if and only if $\lambda = \mu \in \Lambda_0$ or $\lambda \mu \in \eca$, and given $\lambda \in \Lambda$, the correspondent coherent component $S_{\lambda}$ is a graph with $m_{\lambda}$ vertices that is complete if $\lambda \in \Lambda_0$, and discrete otherwise.

These properties give us the following useful result on the weights of a positive graph. We call two edges \emph{similar} if they join the same pair of coherent components.
\begin{proposition}\label{mismos_pesos}
Let $\gca = (S,E)$ be a positive graph, with weights $(c_i)_{i=1}^q$ for some $\nu>0$ fixed. If $l_i, l_j$ are two similar edges, then $c_i = c_j$.
\end{proposition}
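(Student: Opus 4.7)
The plan is to exploit the uniqueness of the nilsoliton weights via Remark \ref{auto_grafo_preserva}: if we can produce a graph automorphism of $\gca$ that sends the edge $l_i$ to the edge $l_j$, then the remark immediately forces $c_i = c_j$. So the whole problem reduces to constructing such an automorphism out of two similar edges.

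Write $l_i = \alpha\beta$ and $l_j = \alpha'\beta'$ with $\alpha,\alpha'\in S_\lambda$ and $\beta,\beta'\in S_\mu$, where $\lambda\mu\in\eca$. First I would handle the generic case $\lambda\neq\mu$. Define $\sigma\colon S\to S$ by $\sigma(\alpha)=\alpha'$, $\sigma(\alpha')=\alpha$, $\sigma(\beta)=\beta'$, $\sigma(\beta')=\beta$, and $\sigma(v)=v$ for every other vertex $v$. To check $\sigma\in\Aut(\gca)$, take an arbitrary edge $uv\in E$ and verify $\sigma(u)\sigma(v)\in E$ by a short case analysis on whether $u,v$ lie in $\{\alpha,\alpha',\beta,\beta'\}$. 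The two key facts used throughout are: (i) $\alpha$ and $\alpha'$ have exactly the same neighbors outside $S_\lambda$ (because $\alpha\sim\alpha'$), and likewise for $\beta,\beta'$; (ii) if any edge exists between $S_\lambda$ and $S_\mu$, then all edges between them are present, so in particular $\alpha\beta',\alpha'\beta\in E$. These two observations cover every subcase and show that $\sigma$ is a graph automorphism. By construction $\sigma(l_i)=l_j$, so Remark \ref{auto_grafo_preserva} yields $c_i=c_j$.

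Next I would dispatch the degenerate case $\lambda=\mu$. Here both edges have their endpoints inside a single coherent component $S_\lambda$, and since $S_\lambda$ already contains an edge, one of the bullet properties recalled in Section \ref{CompCoherentes} forces $S_\lambda$ to be a complete subgraph. Any bijection that permutes $S_\lambda$ and fixes $S\setminus S_\lambda$ is then a graph automorphism (completeness inside $S_\lambda$ takes care of edges within, while the common-neighbors property takes care of edges going outside). Pick any such permutation sending the pair $\{\alpha,\beta\}$ to $\{\alpha',\beta'\}$, and again conclude $c_i=c_j$ from Remark \ref{auto_grafo_preserva}.

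I do not expect a serious obstacle: everything rests on the two structural properties of coherent components recalled just before the proposition. The only place one must pay attention is the routine verification that the swap $\sigma$ in the case $\lambda\neq\mu$ really is an automorphism, in particular at the edges of the form $\alpha\beta'$ and $\alpha'\beta$, where one must invoke the full-connectivity between adjacent coherent components rather than the common-neighbors property alone.
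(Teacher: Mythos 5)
Your proposal is correct and follows essentially the same route as the paper: reduce to Remark \ref{auto_grafo_preserva} and exhibit the double-swap $\sigma$ exchanging the endpoints of the two similar edges, which the paper also does. Your extra case analysis (including the degenerate case where both edges lie inside one complete coherent component) merely fills in the verification the paper dismisses as clear "from the properties of the coherent components."
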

\begin{proof}
By Remark \ref{auto_grafo_preserva} it suffices to prove that there is a graph automorphism interchanging $l_i$ and $l_j$. Let us write $l_i = v_a v_b$, $l_j = v_c v_d$, with $v_a,v_c \in S_{\lambda}$, $v_b, v_d \in S_{\mu}$. We define $\sigma: S \rightarrow S$ by $\sigma(v_a) = v_c$, $\sigma(v_c) = v_a$, $\sigma(v_b) = v_d$, $\sigma(v_d) = v_b$, and $\sigma(v) = v$ otherwise. From the properties of the coherent components it is clear that $\sigma$ is a graph automorphism of $\gca$, and it interchanges $l_i$ with $l_j$.
\end{proof}


\section{Symmetric derivations and orthogonal automorphisms of $\ngo_{\gca}$}\label{herra}

We have shown how to associate solsolitons to a positive graph. Following the discussion in \ref{sols from nils}, it is clear that in order to classify solsolitons up to isometry we have to solve the following problem:
\begin{problem}\label{problema}
To describe the set of all abelian Lie algebra of symmetric derivations of $\ngo_{\gca}$ up to conjugation by an orthogonal automorphism of $\ngo_{\gca}$.
\end{problem}
Recall that the notions of symmetric and orthogonal here are with respect to the nilsoliton inner product $\ip_1$ on $\ngo_{\gca}$.

\vskip15pt

\subsection{Derivations of a 2-step nilpotent Lie algebra}
Let $\ngo$ be an arbitrary 2-step nilpotent Lie algebra, and let us consider a vector space $V$ complementary to $[\ngo,\ngo]$.

Given $\theta \in \Hom(V,[\ngo,\ngo])$ we define $D_{\theta} \in \End(\ngo)$ as $D_{\theta}(\xi) = \theta (\pi (\xi))$, where $\pi : \ngo \rightarrow V$ is the canonical projection. Since $\ngo$ is 2-step nilpotent, we see that $D_{\theta}([\xi,\eta]) = \theta(\pi([\xi,\eta])) = \theta(0) = 0 = [D_{\theta}\xi,\eta] + [\xi,D_{\theta}\eta]$, then $D_{\theta}\in \Der(\ngo)$. These derivations form a Lie subalgebra of $\Der(\ngo)$, which we call $\ug$. Moreover, let $\tgo = \{D\in \Der(\ngo)\, : \, D(V) \subseteq V \}$, another Lie subalgebra of $\Der(\ngo)$.
\begin{proposition}\label{descDer}
$\Der(\ngo) = \ug \oplus \tgo$, as vector spaces.
\end{proposition}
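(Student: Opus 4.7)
The plan is a direct construction using the decomposition $\ngo = V \oplus [\ngo,\ngo]$. I will verify separately that the sum is direct and that it exhausts $\Der(\ngo)$; both parts are essentially formal once one observes that $[\ngo,\ngo]$ is invariant under every derivation (which is automatic) and that $D_\theta$ vanishes on $[\ngo,\ngo]$ by construction (its definition uses $\pi$, which kills $[\ngo,\ngo]$).

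For directness, I would take $D \in \ug \cap \tgo$ and write $D = D_\theta$ for some $\theta \in \Hom(V,[\ngo,\ngo])$. Since $D \in \tgo$, we have $D(V) \subseteq V$; but $D(V) = \theta(V) \subseteq [\ngo,\ngo]$, and $V \cap [\ngo,\ngo] = 0$ forces $\theta = 0$. This shows $\ug \cap \tgo = 0$.

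For the spanning part, given $D \in \Der(\ngo)$ arbitrary, I would decompose its action on $V$ along $\ngo = V \oplus [\ngo,\ngo]$. Concretely, define $\theta \in \Hom(V,[\ngo,\ngo])$ by $\theta(v) = (I-\pi)(Dv)$, i.e., the $[\ngo,\ngo]$-component of $Dv$ for $v\in V$, and set $D' := D - D_\theta$. By the setup, $D_\theta \in \Der(\ngo)$, hence $D' \in \Der(\ngo)$ as well. For $v \in V$, by construction $D'v = Dv - \theta(v) = \pi(Dv) \in V$, so $D'(V) \subseteq V$ and therefore $D' \in \tgo$. This yields $D = D_\theta + D'$ in $\ug + \tgo$.

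I do not foresee a serious obstacle; the only point to be careful about is to state explicitly which decomposition is being used when defining $\theta$ (namely $\ngo = V \oplus [\ngo,\ngo]$) and to remark that $D_\theta$ is automatically a derivation by the 2-step nilpotency argument already given in the excerpt, so that $D - D_\theta$ remains a derivation. No deeper property of $\ngo$ is needed.
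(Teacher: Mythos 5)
Your argument is correct and is essentially the proof given in the paper: you decompose $D|_V$ along $\ngo = V \oplus [\ngo,\ngo]$, subtract the derivation $D_\theta$ built from the $[\ngo,\ngo]$-component (the paper's $\psi$), and observe the remainder preserves $V$, while your directness check merely spells out the step the paper declares clear. No gaps; nothing further is needed.
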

\begin{proof}
It is clear that $\ug \cap \tgo = 0$. Now if $D\in \Der(\ngo)$, let $\phi \in \Hom(V,V)$, $\psi\in \Hom(V,[\ngo,\ngo])$ such that $D(v) = \phi(v) + \psi(v)$ for all $v\in V$. Let $D_{\psi}\in \ug$ be the derivation associated with $\psi$. Then $D-D_{\psi} \in \Der(\ngo)$ and $(D-D_{\psi})(v) = \phi(v) \in V$, hence $D-D_{\psi}\in \tgo$.
\end{proof}

About $\tgo$ we can say that, since $[V,V] = [\ngo,\ngo]$, its elements are clearly determined by their restrictions to $V$, $\ggo = \{ D|_V : D\in \tgo \}$, a Lie subalgebra of $\glg(V)$. If we call $T$ the subgroup of automorphisms of $\Aut(\ngo)$ that leave $V$ invariant, these are also clearly determined by its restrictions to $V$. Let $G \subseteq GL(V)$ be the Lie subgroup consisting of restrictions of elements of $T$ to $V$; it is clear that $\ggo$ is the Lie algebra of $G$. We will mention some of the properties of this Lie algebra, which has been studied in \cite{DM}.

\subsection{Symmetric derivations of $\ngo_{\gca}$}
Suppose now that $\ngo_{\gca}$ is the Lie algebra associated with a positive graph $\gca = (S,E)$. In this case, $V$ becomes the vector space spanned by `the vertices', and $[\ngo_{\gca},\ngo_{\gca}] = [V,V]$ is the one spanned by `the edges'. We have the following information about $\ggo$.
\begin{proposition}\cite{DM}\label{ggo}
Let $\Lambda$ be the set of coherent components of the graph $(S,E)$, and for each $\lambda \in \Lambda$ let $S_{\lambda}$ be the corresponding coherent component. Finally, let $V_{\lambda}$ be the subspace of $V$ spanned by $S_{\lambda}$. Then, \be \ggo = \qg \oplus \mg \ee
where $\qg = \bigoplus_{\lambda \in \Lambda} \End(V_{\lambda})$, viewed as a Lie subalgebra of $\End(V)$ via the canonical embedding, and $\mg$ is a nilpotent ideal in $\ggo$. Furthermore, the elements of $\Lambda$ can be enumerated as $\lambda_1, \ldots, \lambda_k$ so that $\bigoplus_{i\leq j} V_{\lambda_i}$ is $D$-invariant, for each $D\in \ggo$ and each $j= 1, \ldots, k$.
\end{proposition}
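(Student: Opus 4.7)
The plan is to describe $\ggo$ explicitly in terms of matrix entries in the standard basis of $V$, identify the block-diagonal piece with $\qg$, and exhibit $\mg$ as a strictly block upper-triangular subalgebra with respect to a suitable ordering of the coherent components.

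First I would characterize $\ggo$ as follows. An endomorphism $D \in \End(V)$ belongs to $\ggo$ iff it extends to a derivation of $\ngo_\gca$. By $2$-step nilpotency, this amounts to requiring the Leibniz prescription $\tilde D[v,w]=[Dv,w]+[v,Dw]$ to be well-defined on $[V,V]$, equivalently: for every $i\neq j$ with $v_i v_j\notin E$,
\[
\sum_k d_{ki}[e_k,e_j]+\sum_k d_{kj}[e_i,e_k]=0,
\]
where $(d_{ki})$ is the matrix of $D$ in the standard basis. Since $v_i v_j\notin E$, the edges incident to $v_i$ are distinct from those incident to $v_j$, so the edge vectors in the two sums are linearly independent and each coefficient vanishes separately. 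The upshot is that $d_{ki}$ (for $k\neq i$) may be nonzero precisely when $\Omega'(v_k)\setminus\{v_i\}\subseteq\Omega'(v_i)$, while diagonal entries are unconstrained.

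Next, if $v_k,v_i\in S_\lambda$ belong to the same coherent component, the relation $v_k\sim v_i$ gives $\Omega'(v_k)\subseteq\Omega(v_i)=\Omega'(v_i)\cup\{v_i\}$, which is exactly the admissibility condition above. Hence $\qg:=\bigoplus_{\lambda\in\Lambda}\End(V_\lambda)$ sits inside $\ggo$ as a Lie subalgebra, and $\ggo=\qg\oplus\mg$ with $\mg$ spanned by the remaining admissible matrix units (those with indices in distinct coherent components). To triangularize $\mg$, set $\deg(\lambda)=|\Omega'(v)|$ for any $v\in S_\lambda$ (well-defined by coherence) and enumerate $\Lambda=\{\lambda_1,\ldots,\lambda_k\}$ in non-decreasing degree. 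The combinatorial heart of the argument is the claim that if $\mu\neq\lambda$ and the matrix unit $E_{ki}$ with $v_k\in S_\mu$, $v_i\in S_\lambda$ is admissible, then $\deg(\mu)<\deg(\lambda)$. Counting sizes gives $\deg(\mu)\leq\deg(\lambda)+1$ when $v_i\in\Omega'(v_k)$ and $\deg(\mu)\leq\deg(\lambda)$ otherwise; in each scenario, equality forces $\Omega'(v_k)\setminus\{v_i\}\subseteq\Omega'(v_i)$ to be a set equality modulo the at most one extra element $v_k$ or $v_i$, and a short check against the definition of $\sim$ then yields $v_k\sim v_i$, contradicting $\mu\neq\lambda$.

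This claim implies that every admissible off-diagonal matrix unit maps $V_{\lambda_j}$ into $\bigoplus_{i<j}V_{\lambda_i}$, so every $D\in\ggo$ preserves the flag $V_{\lambda_1}\subseteq V_{\lambda_1}\oplus V_{\lambda_2}\subseteq\cdots\subseteq V$, and elements of $\mg$ are strictly block upper-triangular with respect to it. Thus $\mg$ is a nilpotent subalgebra; the inclusions $[\qg,\mg]\subseteq\mg$ and $[\mg,\mg]\subseteq\mg$ then follow from this triangular structure (noting that brackets lie in $\ggo$ and preserve the strict upper-triangular shape). The main obstacle is the combinatorial claim above: showing that admissible off-diagonal entries from $S_\mu$ to $S_\lambda$ are strictly controlled by the degree order. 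The delicate part is ruling out the equality $\deg(\mu)=\deg(\lambda)$, which would otherwise collapse $\mu$ and $\lambda$ into the same coherent component.
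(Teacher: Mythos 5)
The paper itself gives no proof of this proposition; it is quoted from \cite{DM}, so there is nothing internal to compare against, and your argument has to stand on its own. It does: the proof is correct. Your entrywise characterization of $\ggo$ is right, since the only possible extension of $D\in\End(V)$ is the forced Leibniz prescription on $[V,V]$, its well-definedness reduces to the vanishing of $[De_i,e_j]+[e_i,De_j]$ for non-adjacent pairs $i\neq j$, and because no edge is incident to both $v_i$ and $v_j$ the edge vectors occurring in the two sums are distinct basis vectors of $[V,V]$; this yields exactly the admissibility condition $\Omega'(v_k)\subseteq\Omega(v_i)$ for a nonzero off-diagonal entry $d_{ki}$, with diagonal entries free (and two-step nilpotency makes the extension automatically a derivation). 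The inclusion $\qg\subseteq\ggo$ and the vector-space splitting $\ggo=\qg\oplus\mg$ follow as you say. Your key combinatorial claim is also true, but it is the one place where the write-up is compressed and should be spelled out: first, $\deg$ is well defined on a coherent component because $v\sim w$ forces $\Omega'(v)\setminus\{w\}=\Omega'(w)\setminus\{v\}$; second, in the adjacent case the sharper bound $\deg(\mu)\leq\deg(\lambda)$ already holds, since $v_k\in\Omega'(v_i)$ while $v_k\notin\Omega'(v_k)$ gives $\Omega'(v_k)\setminus\{v_i\}\subseteq\Omega'(v_i)\setminus\{v_k\}$, so only the equality case needs to be excluded, and there $\Omega'(v_k)\setminus\{v_i\}=\Omega'(v_i)\setminus\{v_k\}$ indeed yields $\Omega'(v_i)\subseteq\Omega(v_k)$ and hence $v_k\sim v_i$, contradicting $\mu\neq\lambda$ (your boundary case $\deg(\mu)=\deg(\lambda)+1$ is in fact vacuous, as it would force $\Omega'(v_i)=\Omega'(v_k)\setminus\{v_i\}$, contradicting $v_k\in\Omega'(v_i)$); the non-adjacent case is exactly as you indicate. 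With the claim in hand, the non-decreasing enumeration by degree gives the flag invariance, $\mg$ is strictly block upper triangular and hence nilpotent, and the ideal property follows because a strictly block-triangular element of $\ggo$ has vanishing $\qg$-component and therefore lies in $\mg$. So your proposal is a sound self-contained derivation of the quoted result, presumably parallel in spirit to the combinatorial analysis in \cite{DM}.
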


Actually, we can see that $\mg \subseteq \bigoplus_{i<j} \Hom(V_{\lambda_j}, V_{\lambda_i})$. If we fix the canonical basis for $\ngo_{\gca}$, choosing an ordering for its elements so that vertices of the same coherent component are consecutive (and also taking into account the enumeration $\lambda_1, \ldots, \lambda_k$ from above), the proposition implies that the matrices of the elements of $\ggo$ are `block-triangular':
\begin{equation}\label{matrizggo}
\begin{array}{c}
    \lambda_1 \\
    \lambda_2 \\
    \lambda_3 \\
              \\
    \lambda_k \\
\end{array}
\left[
  \begin{array}{ccccc}
      A_1   &   *   &    *  & \dots &    *  \\
       0    &   A_2 &    *  & \dots &   *   \\
       0    &   0   &  A_3  & \dots &   *   \\
      \vdots& \vdots&\vdots & \ddots&\vdots \\
       0    &   0   &   0   & \dots &  A_k  \\
  \end{array}
\right]
\end{equation}
with $A_i \in \End(V_{\lambda_i})$, $i = 1, \ldots, k$. The blocks $A_i$ represent the component in $\qg$, and the $*$ the component in $\mg$.

Now take $D\in \Der(\ngo_{\gca})$ such that $D$ is $\ip_1$-symmetric. From Proposition \ref{descDer} we have that $D = D_u + D_t$, with $D_u = \left[ \bsm 0 & 0 \\ D_{21} & 0\\ \esm \right]\in \ug, \,\,D_t = \left[ \bsm D_{11} & 0 \\ 0 & D_{22}\\ \esm \right] \in \tgo$ as matrices, where the blocks are defined according to the decomposition $\ngo_{\gca} = V \oplus [V,V]$. Since the chosen basis is $\ip_1$-orthogonal, it is clear that $D_u = 0$. The component $D_t$ is determined by its restriction to $V$, $D_{11}$, and the fact that $\ip_1 |_V$ is the canonical inner product there, implies that $D_{11}$ is a symmetric matrix. Using that $D_{11}\in \ggo$, we look at its matrix as in (\ref{matrizggo})and  we see that it is block-diagonal, with $A_i \in \sym(V_{\lambda_i}, \ip)$ for all $i$. This suggests us the following theorem.

\begin{theorem}\label{TeoDerSim}
Let $V = \bigoplus_{\lambda \in \Lambda} V_{\lambda}$ be the decomposition of $V$ with respect to the coherent components of the graph. Every $\ip_1$-symmetric derivation $D$ of $\ngo_{\gca}$ leaves $V$ invariant, and is determined by its restriction $D|_V$. If we call $\ggo_{sym} = \{D|_V : D\in \Der(\ngo_{\gca})\cap \sym(\ngo_{\gca},\ip_1)\} \subseteq \End(V)$, then
\be \ggo_{sym} = \bigoplus_{\lambda \in \Lambda} \sym(V_{\lambda}),\ee
where the symmetry is defined according to the canonical inner product in $V_{\lambda}$, and $\sym(V_{\lambda})$ is viewed as a subspace of $\End(V)$ via the canonical embedding.
\end{theorem}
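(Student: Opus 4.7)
The theorem packages three assertions: $V$-invariance of any $\ip_1$-symmetric derivation, that such a derivation is determined by its restriction to $V$, and the explicit identification of $\ggo_{sym}$. My plan is to dispatch the first two via the general decomposition results already established and then split the identification into two containments.

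For the $V$-invariance, take $D \in \Der(\ngo_\gca) \cap \sym(\ngo_\gca,\ip_1)$ and decompose $D = D_u + D_t$ as in Proposition \ref{descDer}. In the block form induced by $\ngo_\gca = V \oplus [V,V]$, $D_u$ is strictly lower-block (mapping $V \to [V,V]$ and killing $[V,V]$) while $D_t$ is block-diagonal. Because the standard basis is $\ip_1$-orthogonal, the subspaces $V$ and $[V,V]$ are $\ip_1$-orthogonal, so the $\ip_1$-adjoint of a strictly lower-block operator is strictly upper-block. Symmetry of $D$ therefore forces $D_u = 0$. That $D$ is determined by $D|_V$ is immediate because $V$ generates $\ngo_\gca$ as a Lie algebra.

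For the inclusion $\ggo_{sym} \subseteq \bigoplus_\lambda \sym(V_\lambda)$, write $D|_V = A + N$ with $A \in \qg = \bigoplus_\lambda \End(V_\lambda)$ and $N \in \mg$ according to Proposition \ref{ggo}. After ordering the coherent components as $\lambda_1,\dots,\lambda_k$, the matrix of $A$ is block-diagonal and the matrix of $N$ is strictly block-upper-triangular. Since $\ip_1|_V$ is the canonical inner product and each $V_\lambda$ is a span of basis vectors, the block decomposition $V = \bigoplus V_\lambda$ is $\ip_1$-orthogonal. The symmetry of $D|_V$ then forces the transpose of $A+N$ to coincide with $A+N$ itself; the block-diagonal part is $A^T$ and the strictly block-lower part is $N^T$, so $N = 0$ and each $A_\lambda$ is canonically symmetric on $V_\lambda$.

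For the reverse inclusion, given $A = \bigoplus_\lambda A_\lambda$ with $A_\lambda \in \sym(V_\lambda)$, I extend $A$ to $\ngo_\gca$ by setting $D|_V = A$ and $D[x,y] = [Ax,y]+[x,Ay]$ for $x,y \in V$. This prescription defines a derivation because $\ngo_\gca$ is $2$-step nilpotent, so the Jacobi identity imposes no extra constraint on the extension. It remains to check that the induced block on $[V,V]$ is $\ip_1$-symmetric, and this is the step I expect to be the main obstacle. The restriction $\ip_1|_{[V,V]}$ is diagonal in the edge basis with weights $c_k$, so $\ip_1$-symmetry on $[V,V]$ translates into an equation of the form $a_{i'i}\,c_{k'} = a_{ii'}\,c_k$ whenever $l_k$ and $l_{k'}$ are two edges sharing a vertex $v_j$ and whose remaining endpoints lie in the same coherent component. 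Since $l_k$ and $l_{k'}$ are then similar, Proposition \ref{mismos_pesos} gives $c_k = c_{k'}$, and the equation reduces to the already-known symmetry $a_{ii'} = a_{i'i}$ of $A_\lambda$. The cases in which the two edges share no vertex or land in distinct components lead to both sides vanishing, so the verification is complete.
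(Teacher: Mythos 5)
Your proposal is correct and follows essentially the same route as the paper: the forward inclusion and $V$-invariance via the decompositions of Propositions \ref{descDer} and \ref{ggo} together with the $\ip_1$-orthogonality of the basis, and the reverse inclusion by checking $\ip_1$-symmetry on the $[V,V]$ block, where the only nontrivial case (two edges sharing a vertex with the remaining endpoints in one coherent component) is handled exactly as in the paper by invoking Proposition \ref{mismos_pesos} to equate the weights of similar edges. The only quibble is bookkeeping: since $\|e_{p+k}\|_1^2 = 1/c_k$ by (\ref{def_g}) and (\ref{pinilsoliton}), the symmetry equation carries the weights inverted relative to what you wrote, but this is immaterial because your argument only uses $c_k=c_{k'}$.
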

\begin{proof}
According to the previous discussion, we only need to prove that if $D_{11} \in \bigoplus_{\lambda \in \Lambda} \sym(V_{\lambda})$ and $D = \left[ \bsm D_{11} & 0 \\ 0 & D_{22}\\ \esm \right]$ is the derivation associated with it, then $D$ is $\ip_1$-symmetric, i.e.
\begin{equation}\label{Simetria}
 \la De_i, e_j \ra_1 = \la e_i, De_j \ra_1 \,\,\,\,\,\, \forall \,i,j \in \{1,\ldots, n\}.
\end{equation}
The properties we have mentioned about $\ip_1$ imply that (\ref{Simetria}) holds in the case $i,j \in \{1,\ldots,p\}$ (as $D_{11}$ is symmetric), and also if $i\in \{1,\ldots,p\}$, $j\in \{p+1,\ldots,n\}$ ($\ip_1$-orthogonality of the basis). It then suffices to prove (\ref{Simetria}) for $e_i, e_j$ such that $e_i = [e_a, e_b]$, $e_j = [e_c,e_d]$, with $a<b, c<d$, $a,b,c,d \in \{1,\ldots,p \}$. If $D_{x,y}$ are the entries of the matrix of $D$ in the chosen basis, we use the definition of the bracket in $\ngo_{\gca}$, the $\ip_1$-orthogonality of the basis, and the fact that $D$ is a derivation, to obtain the formula
\be
\frac{\la D[e_a,e_b], [e_c,e_d] \ra_1 }{ \|\,[e_c,e_d] \|_1}
= \left\{
      \begin{array}{ll}
        D_{d,b} + D_{c,a}, & a=c, b=d \hbox{;} \\
        D_{d,b}, & a=c, b\neq d \hbox{;} \\
        D_{c,a}, & b=d, a\neq c\hbox{;} \\
        D_{c,b}, & a=d \hbox{;} \\
        D_{d,a}, & b=c \hbox{;} \\
        0, & \hbox{otherwise;}
      \end{array}
    \right.
\ee
where $\| v\|_1 = \la v,v\ra_1^{1/2}$. An analogous formula for $\frac{\la D[e_c,e_d], [e_a,e_b] \ra_1 }{ \|\,[e_a,e_b] \|_1}$ can be obtained changing the roles of $a,b$ with $c,d$ respectively. Since $D_{1,1}$ is symmetric as a matrix we obtain at once that
\begin{equation}\label{SimetriaCasi} \frac{\la D[e_c,e_d], [e_a,e_b] \ra_1 }{ \|\,[e_a,e_b] \|_1} = \frac{\la D[e_a,e_b], [e_c,e_d] \ra_1 }{ \|\,[e_c,e_d] \|_1}. \end{equation}
In the cases `$a=c, b=d$' or `otherwise' we obtain immediately (\ref{Simetria}). The statement is also trivial if $\la D[e_c,e_d], [e_a,e_b] \ra_1 = 0$.  Suppose then that we have $b=c$ and $\la D[e_c,e_d], [e_a,e_b] \ra_1 \neq 0$ (the rest of the cases are similar). This amounts to say that $D_{d,a} \neq 0$. Since $D_{11} \in \bigoplus_{\lambda \in \Lambda} \sym(V_{\lambda})$, $a$ and $d$ are vertices of the same coherent component. Then $ab$ and $cd$ join the same pair of coherent components $S_{\lambda}$ and $S_{\mu}$. Finally, Proposition \ref{mismos_pesos} and formula (\ref{pinilsoliton}) imply that $\ip_1$ restricted to $[V_{\lambda}, V_{\mu}]$ is a multiple of the canonical inner-product there, therefore $\|\,[e_a,e_b] \|_1 = \|\,[e_c,e_d] \|_1$. Hence (\ref{SimetriaCasi}) implies (\ref{Simetria}) and we are done.
\end{proof}

\vskip12pt

\subsection{The group $\Aut({\ngo_{\gca}}) \cap \Or({\ngo_{\gca}})$}\label{autOrSec}
Following \cite{DM} we write $$\Aut(\ngo_{\gca}) = T \ltimes U,$$
where $U = \{\tau \in \Aut(\ngo_{\gca}) : \tau = \left[ \bsm I_p & 0 \\ \theta & I_q\\ \esm \right], \theta \in \Hom(V,[V,V]) \}$, and $T = \{ \tau \in \Aut(\ngo_{\gca}) : \tau(V) = V \} $, identifying matrices with operators using the canonical basis of $\ngo_{\gca}$ fixed, ordered as in (\ref{matrizggo}) (this decomposition is analogous to the one in Proposition \ref{descDer} for $\Der(\ngo_{\gca})$). Take  $\tau \in \Aut({\ngo_{\gca}}) \cap \Or({\ngo_{\gca}})$, $\tau = \tau_{\theta} \tau_T$ with $\tau_{\theta} = \left[ \bsm I_p & 0\\ \theta & I_q\\ \esm \right] \in U$ ($\theta \in \Hom(V, [V,V])$) and $\tau_T = \left[ \bsm \tau_{11} & 0\\ 0 & \tau_{22} \\ \esm \right] \in T$. We see that $\tau = \left[ \bsm \tau_{11} & 0\\ \theta \tau_{11} & \tau_{22}\\ \esm \right]$, hence the orthogonality condition, together with the fact that the basis is $\ip_1$-orthogonal and that $\tau_{11}, \tau_{22}$ are invertible, imply $\theta = 0$. We therefore have
\begin{equation}\label{autOr_subset_T}
\Aut({\ngo_{\gca}}) \cap \Or({\ngo_{\gca}}) \subseteq T.
\end{equation}

Recall the discussion after Proposition \ref{descDer}: using (\ref{autOr_subset_T}), it restricts our attention to the Lie subgroup $G = \{\tau|_V : \tau \in T \}$. We know that an element of $G$ determines a unique extension to $\Aut(\ngo_{\gca})$, and for this to be $\ip_1$-orthogonal it is necessary that the former belongs to $\Or(V,\ip)$. In spite of the fact that this condition is not sufficient, we shall show that actually it is, in some special cases.

For each $e_{p+k} \in [V,V]$, if $l_k = e_i e_j$ with $1\leq i<j\leq p$, we define the $n\times n$ antisymmetric matrix $J(e_{p+k}) = E_{i,j} - E_{j,i}$, where $E_{a,b}$ is the matrix with its $a,b$-entry equal to 1, and zeroes otherwise. By linearity we can define $J(z)$ for every $z\in [V,V]$. Then the canonical inner product in $[V,V]$ is given by the formula
\be \la z,w \ra = -\frac12 \tr J(z) J(w), \qquad z,w \in [V,V]. \ee
Moreover, these matrices help us relating the Lie bracket with that inner product, via the following identity
\be \la [v,w],z \ra = \la J(z)v,w \ra, \qquad v,w\in V, \,\, z\in [V,V]. \ee

\begin{lemma}\label{extensionOr}
Given $A\in G \cap \Or(V, \ip)$, $A = \tau |_V$ with $\tau \in T$, we always have that $\tau \in \Or(\ngo_{\gca}, \ip)$. And $\tau \in \Or(\ngo_{\gca}, \ip_1)$ if and only if $\tau$ commutes with $g^2$ (where $g$ is the matrix defined in (\ref{def_g})).
In particular, this happens if $A$ leaves the coherent components invariant.
\end{lemma}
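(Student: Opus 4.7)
The plan is to handle the three assertions separately, with the real content lying in the first.

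For $\tau \in \Or(\ngo_{\gca}, \ip)$: since $\tau \in T$ preserves the $\ip$-orthogonal decomposition $\ngo_{\gca} = V \oplus [V,V]$ and restricts to the orthogonal map $A$ on $V$, it suffices to check that $\tau|_{[V,V]}$ is $\ip$-orthogonal. I would identify $[V,V]$ isometrically with the subspace $W \subseteq \Lambda^2 V$ spanned by $\{e_i \wedge e_j : l_k = v_iv_j \text{ is an edge of } \gca\}$, where $\Lambda^2 V$ carries its canonical inner product; under this identification, $\tau|_{[V,V]}$ corresponds to $P_W \circ (\Lambda^2 A)|_W$ with $P_W$ the orthogonal projection onto $W$. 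The key step is to show $(\Lambda^2 A)(W) \subseteq W$. The hypothesis $A \in G$ means precisely that $\Lambda^2 A$ preserves the kernel $W^\perp$ of the bracket map $\Lambda^2 V \to [V,V]$ (for if $[v,w] = 0$ then $[Av, Aw] = \tau[v,w] = 0$), and the orthogonality of $A$ makes $\Lambda^2 A$ orthogonal on $\Lambda^2 V$; combining these forces $(\Lambda^2 A)(W) \subseteq (W^\perp)^\perp = W$. Then $\tau|_{[V,V]}$ corresponds to $(\Lambda^2 A)|_W$, which is orthogonal as the restriction of an orthogonal map to an invariant subspace.

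For the equivalence $\tau \in \Or(\ngo_{\gca}, \ip_1) \Leftrightarrow \tau g^2 = g^2 \tau$: I would simply unwind the definition $\la z, w\ra_1 = \la g^{-1}z, g^{-1}w\ra$. The $\ip_1$-orthogonality condition reads $\tau^T g^{-2} \tau = g^{-2}$; substituting $\tau^T = \tau^{-1}$, which is guaranteed by the first assertion, yields the equivalent commutation $g^2 \tau = \tau g^2$.

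For the sufficient condition, I would recall from (\ref{def_g}) that $g^2$ acts as $I_p$ on $V$ and diagonally on $[V,V]$ with eigenvalue $c_k$ on $e_{p+k}$. By Proposition \ref{mismos_pesos}, any two edges joining the same pair of coherent components share the same weight, so the eigenspaces of $g^2|_{[V,V]}$ decompose as direct sums of subspaces of the form $[V_\lambda, V_\mu]$ as $(\lambda, \mu)$ ranges over pairs of coherent components joined by at least one edge. If $A$ leaves each $V_\lambda$ invariant, then $\tau([V_\lambda, V_\mu]) = [AV_\lambda, AV_\mu] = [V_\lambda, V_\mu]$, so $\tau$ preserves every eigenspace of $g^2$ and hence commutes with it.

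The hard part will be the first assertion, specifically the invariance $(\Lambda^2 A)(W) \subseteq W$: neither the automorphism condition nor the orthogonality condition alone yields it, and the argument hinges on the observation that an orthogonal map preserving a subspace automatically preserves its orthogonal complement.
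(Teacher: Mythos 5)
Your proof is correct, but the first (and main) assertion is handled by a genuinely different route from the paper. The paper argues via the skew-symmetric maps $J(z)$, $z\in[V,V]$, defined by $\la [v,w],z\ra=\la J(z)v,w\ra$: from $\tau\in\Aut(\ngo_\gca)$ one gets $J(\tau^tz)=\tau^tJ(z)\tau=\tilde{A}^tJ(z)\tilde{A}$ with $\tilde{A}=\left[\bsm A&0\\0&I_q\esm\right]$, and the trace formula $\la z,w\ra=-\tfrac12\tr J(z)J(w)$ then gives $\|\tau^tz\|=\|z\|$ on $[V,V]$, hence orthogonality of $\tau$. You instead work on the dual side, in $\Lambda^2V$: you identify $[V,V]$ isometrically with $W=(\ker\beta)^\perp$, where $\beta:\Lambda^2V\to[V,V]$ is the bracket map (its kernel being spanned by the non-edge decomposables $e_i\wedge e_j$, which is exactly what makes your parenthetical argument $[Av,Aw]=\tau[v,w]=0$ suffice), observe that $\Lambda^2A$ is orthogonal and preserves $\ker\beta$, hence also preserves its orthogonal complement $W$, and conclude that $\tau|_{[V,V]}$ corresponds to the orthogonal map $(\Lambda^2A)|_W$. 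The two arguments are dual formulations of the same underlying fact; the paper's is a short explicit matrix/trace computation, while yours isolates the conceptual point (an orthogonal map preserving a subspace preserves its complement) and avoids introducing the $J$-maps, at the cost of setting up the $\Lambda^2V$ identification. Your treatment of the $\ip_1$-equivalence is the same unwinding of $\la z,w\ra_1=\la g^{-1}z,g^{-1}w\ra$ as in the paper (both correctly using the already-established $\ip$-orthogonality of $\tau$), and your last step is the paper's argument recast in terms of eigenspaces of $g^2$ rather than the entrywise conditions $\tau_{p+i,p+j}(c_i-c_j)=0$; both rest on Proposition \ref{mismos_pesos} in the same way.
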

\begin{proof}
Take $A$ and $\tau$ as in the statement. Via identification of $\tau$ with a matrix, we see that the first $p\times p$ block of it is precisely $A$. From
\begin{eqnarray*}
\la J(\tau^t z) v, w \ra = \la [v,w],\tau^t(z) \ra = \la \tau [v,w], z \ra = \la [\tau v, \tau w], z \ra = \\
= \la J(z) \tau v, \tau w \ra = \la \tau^t J(z) \tau v ,w \ra, \,\,\,\,\,\,
\forall v,w \in V, \,\, z\in [V,V]
\end{eqnarray*}
we have that $J(\tau^t z) = \tau^t J(z) \tau$, where the transposes are with respect to $\ip$. If $\tilde{A} = \left[ \bsm A & 0\\ 0 & I_q\\ \esm \right]$ this implies $J(\tau^t z) = \tilde{A}^t J(z) \tilde{A}$. And since $\tilde{A}$ is $\ip$-orthogonal we have
\be \|z\|^2 = -\frac12 \tr J(z)^2 = -\frac12 \tr J(\tau^t z)^2 = \|\tau^t z\|^2,\,\,\,\,\, \forall z\in [V,V] \ee
hence $\tau^t$ is orthogonal, and so is $\tau$.

We now turn to the case of $\ip_1$-orthogonality. Let us suppose first that $\tau$ commutes with $g^2$; this amounts to say that $\tau^t g^{-2} \tau = g^{-2}$ since we have already proved that $\tau$ is $\ip$-orthogonal. Hence,
\begin{eqnarray*}
\la \tau z, \tau w \ra_1 = \la g^{-1} \tau z , g^{-1} \tau w \ra = \la \tau ^t g^{-2} \tau z ,w \ra = \\
= \la g^{-2} z,w \ra = \la g^{-1}z, g^{-1}w \ra = \la z,w \ra_1, \,\,\,\,\, \forall z,w \in [V,V]
\end{eqnarray*}
and consequently $\tau \in \Or(\ngo_{\gca}, \ip_1)$. Conversely, if $\tau$ is $\ip_1$-orthogonal then
\be
\la \tau^t g^{-2} \tau z,w \ra = \la \tau z, \tau w \ra_1 = \la z,w \ra_1 = \la g^{-2}z,w \ra, \,\,\,\, \forall z,w\in [V,V]
\ee
thus $\tau^t g^{-2} \tau = g^{-2}$ and therefore $\tau$ commutes with $g^2$.

In order to prove our last assertion suppose that $A$ has the given condition, and let us rewrite the identity $\tau g^2 = g^2 \tau$ as $\tau_{i,j} (g^2_{i,\,i} - g^2_{j,\,j}) = 0, \,\,\, \forall\, i,j\in \{1,\ldots,n \}, $
which is equivalent to
\begin{equation}\label{tau_g^2}
\tau_{p+i,p+j}(c_i - c_j) = 0,\,\,\,  \forall\, i,j\in \{1,\ldots,q \}.
\end{equation}
Now take $i,j$ such that $\tau_{p+i,p+j} \neq 0$. Since $A$ leaves the coherent components invariant, the edges $e_{p+i}$ and $e_{p+j}$ are similar. Hence from Proposition \ref{mismos_pesos} we have that $c_i = c_j$, so (\ref{tau_g^2}) holds, and this finishes the proof.
\end{proof}

In \cite{DM} a characterization for $G^0$, the identity component of the Lie group $G$, is given, according to the one obtained in Proposition \ref{ggo} for its Lie algebra $\ggo$.
\begin{proposition}\cite{DM}
Following the notation from Proposition \ref{ggo}, we have that
\be G^0 = (\prod_{\lambda \in \Lambda} \Gl^+(V_{\lambda})) \cdot M \ee
where, for each $\lambda \in \Lambda$, $\Gl^+(V_{\lambda})$ denotes the subgroup of $\Gl(V_{\lambda})$ consisting of elements with positive determinant (the product is viewed as a subgroup of $\Gl(V)$ via the canonical inclusion) and $M$ is a closed, connected, normal, nilpotent Lie subgroup of $G_0$. Furthermore, the elements of $\Lambda$ can be enumerated as $\lambda_1, \ldots, \lambda_k$ so that $\bigoplus_{i\leq j} V_{\lambda_i}$ is $G^0$-invariant, for every $ j = 1, \ldots, k$.
\end{proposition}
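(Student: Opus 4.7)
The plan is to lift the Lie algebra decomposition $\ggo = \qg \oplus \mg$ of Proposition \ref{ggo} to the group level by exponentiation; this mirrors a Levi-style decomposition, and once the two exponentiated pieces are understood the filtration claim will follow for free from its infinitesimal counterpart.

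First I would set $M := \exp(\mg) \subseteq \Gl(V)$. By the remark following Proposition \ref{ggo}, every element of $\mg$ is strictly block-upper-triangular with respect to the enumeration $\lambda_1,\ldots,\lambda_k$ and is therefore a nilpotent matrix. Consequently $\exp : \mg \to M$ is a polynomial diffeomorphism, so $M$ is a connected unipotent Lie subgroup of $\Gl(V)$, and the nilpotency of $\mg$ as a Lie algebra descends to nilpotency of $M$ as a Lie group. Because $\mg$ is an ideal of $\ggo$, one has $\Ad(\exp X)\mg \subseteq \mg$ for every $X \in \ggo$, so $M$ is normalized by the connected group $G^0$, giving $M \triangleleft G^0$.

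Next I would identify the group side of $\qg = \bigoplus_{\lambda \in \Lambda} \End(V_\lambda)$: it exponentiates to the connected Lie subgroup $Q := \prod_{\lambda \in \Lambda} \Gl^+(V_\lambda)$, each factor being the identity component $\Gl(V_\lambda)^0$. Since $Q$ normalizes $M$, the product $QM$ is a subgroup of $G^0$; it is connected as a product of connected subgroups and its Lie algebra equals $\qg + \mg = \ggo$, so by the standard uniqueness of connected subgroups with a prescribed Lie algebra we conclude $QM = G^0$. The filtration statement is then immediate: Proposition \ref{ggo} gives $D\bigl(\bigoplus_{i\leq j} V_{\lambda_i}\bigr) \subseteq \bigoplus_{i\leq j} V_{\lambda_i}$ for every $D \in \ggo$, and since $G^0$ is connected with Lie algebra $\ggo$, this invariance lifts to $G^0$.

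The step I expect to need the most care is verifying that $M = \exp(\mg)$ is genuinely a closed subgroup rather than merely an immersed submanifold; this follows from the fact that $M$ sits inside the closed unipotent subgroup of block-upper-triangular matrices with identity diagonal blocks, and that it equals the image of the linear subspace $\mg$ under the polynomial map $\exp$, so it is an algebraic, hence closed, subvariety of $\Gl(V)$. Once closedness is in hand, all the remaining assertions (normality, connectedness, nilpotency, and the product decomposition) assemble directly from the Lie-algebra-level statements of Proposition \ref{ggo}.
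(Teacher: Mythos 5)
The paper itself offers no proof of this proposition: it is quoted verbatim from \cite{DM}, so there is nothing internal to compare your argument against. Taken on its own terms, your strategy --- derive the group-level statement from the Lie-algebra decomposition of Proposition \ref{ggo} by exponentiation --- is sound: $\mg\subseteq\bigoplus_{i<j}\Hom(V_{\lambda_j},V_{\lambda_i})$ consists of nilpotent operators, $\exp(\mg)$ is normalized by $G^0$ because $\mg$ is an ideal and $G^0$ is connected, the connected subgroup with Lie algebra $\qg=\bigoplus_\lambda\End(V_\lambda)$ is indeed $\prod_\lambda\Gl^+(V_\lambda)$, and the flag invariance lifts from $\ggo$ to $G^0$ by connectedness. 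Presumably \cite{DM} works directly with automorphisms at the group level, so your route, which leans entirely on the infinitesimal statement already recorded in the paper, is a legitimately different (and shorter) reconstruction.

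Two justifications deserve repair. First, you assert that $M=\exp(\mg)$ is a subgroup as a consequence of $\exp$ being a polynomial diffeomorphism; the group property actually needs the Baker--Campbell--Hausdorff formula, which terminates here because $\mg$ is a nilpotent algebra of nilpotent matrices and has its values in $\mg$ since $\mg$ is a subalgebra. Second, your closedness argument --- ``the image of a linear subspace under a polynomial map is algebraic, hence closed'' --- is not a valid general principle: images of polynomial maps are only constructible (think of $(x,y)\mapsto(x,xy)$). The correct and equally short argument is that on the unipotent group $U$ of block-upper-triangular matrices with identity diagonal blocks, $\log$ is a polynomial inverse of $\exp$, so $\exp(\mg)=\{u\in U:\log u\in\mg\}$ is the preimage of the closed set $\mg$ under a continuous map, hence closed in $U$ and therefore in $\Gl(V)$. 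Finally, when you conclude $QM=G^0$ from ``uniqueness of connected subgroups with a prescribed Lie algebra,'' you should say why $QM$ is a connected Lie subgroup at all: either invoke the standard theorem that the subgroup generated by connected Lie subgroups is an immersed connected Lie subgroup whose Lie algebra is the subalgebra generated by theirs (here $\qg+\mg=\ggo$, and $QM$ is already that subgroup since $Q$ normalizes $M$), or argue locally that $(X,Y)\mapsto\exp X\exp Y$ has invertible differential at the origin, so $QM$ contains a neighborhood of the identity in $G^0$ and hence all of the connected group $G^0$. With these emendations the proof is complete.
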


As in (\ref{matrizggo}), we see that the canonical matrix representations of the elements of $G^0$ are block-triangular:
\begin{equation}\label{matrizG}
\begin{array}{c}
    \lambda_1 \\
    \lambda_2 \\
    \lambda_3 \\
              \\
    \lambda_k \\
\end{array}
\left[
  \begin{array}{ccccc}
      B_1   &   *   &    *  & \dots &    *  \\
       0    &   B_2 &    *  & \dots &   *   \\
       0    &   0   &  B_3  & \dots &   *   \\
      \vdots& \vdots&\vdots & \ddots&\vdots \\
       0    &   0   &   0   & \dots &  B_k  \\
  \end{array}
\right]
\end{equation}
where $B_i \in \Gl^+(V_{\lambda_i})$, $i=1,\ldots, k$.

It is clear now that for an element of $G^0$ to be $\ip$-orthogonal it is necessary that its $M$-component be trivial, and that $B_i\in \Or(V_{\lambda_i},\ip)$ $\forall i$. Since $\det(B_i) > 0$ the last assertion is equivalent to $B_i\in SO(V_{\lambda_i},\ip)$. Conversely, it is easily seen that these conditions are sufficient, hence
\begin{equation}\label{G0capOr}
G^0 \cap \Or(V,\ip) = \prod_{i=1}^k \SO(V_{\lambda_i},\ip)
\end{equation}

Let $D(V) \subseteq GL(V)$ be the set of operators whose canonical matrix representations are diagonal; it is easy to check that $D(V) \subseteq G$. Since we are mainly interested in the orthogonal elements, we define $D_1(V) = D(V)\cap \Or(V,\ip) = \{ A\in D(V) : (A)_{ii} \in \{-1,1 \}\,\, \forall i= 1, \ldots, p \}$.

Finally, let $\Sigma(S,E)$ be the set of graph automorphisms of $(S,E)$. By linearity, every element of $\Sigma(S,E)$ can be uniquely extended to an operator in $GL(V)$, and this operator can be extended to an element in $\Aut(\ngo_{\gca})$; we also call $\Sigma(S,E)$ the discrete subgroup of $GL(V)$ thus obtained. Since a permutation is orthogonal, we have that $\Sigma(S,E) \subseteq G\cap \Or(V,\ip)$.

We conclude this section with the main theorem about $\ip_1$-orthogonal automorphisms of $\ngo_{\gca}$.

\begin{theorem}\label{TeoAutOr}
$\Aut(\ngo_{\gca}) \cap \Or(\ngo_{\gca},\ip_1) = T\cap\Or(\ngo_{\gca},\ip_1)$, where $T$ is the set of automorphisms of $\ngo_{\gca} = V + [V,V]$ that leave $V$ invariant. Every element of $T$ is determined by its restriction to $V$; we call $G$ the set of those restrictions. For $G_{or}$, the subgroup of $G$ consisting of restrictions of elements from $T\cap\Or(\ngo_{\gca},\ip_1)$ to $V$, the following properties hold:
\begin{itemize}
\item[(i)] If $G^0$ is the identity component of $G$,
    \be G^0 \cap \Or(V,\ip) = \prod_{i=1}^k \SO(V_{\lambda_i},\ip) \subseteq G_{or}, \ee

\item[(ii)] $G_{or}$ contains $D_1(V)$ and $\Sigma(S,E)$ as subgroups.

\item[(iii)] $G_{or}$ also contains the operators that act $\ip$-orthogonally on the coherent components of $(S,E)$, i.e.
    \be \prod_{i=1}^k \Or(V_{\lambda_i},\ip) \subseteq G_{or},\ee
\end{itemize}
where each $\SO(V_{\lambda_i},\ip)$ and $\Or(V_{\lambda_i},\ip)$ is viewed as a subgroup of $\Gl(V)$ via the canonical embedding.
\end{theorem}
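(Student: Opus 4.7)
The first equality $\Aut(\ngo_\gca) \cap \Or(\ngo_\gca,\ip_1) = T \cap \Or(\ngo_\gca,\ip_1)$ is exactly the inclusion (\ref{autOr_subset_T}) established just before Lemma \ref{extensionOr}. The injectivity of the restriction-to-$V$ map on $T$ is immediate from $[V,V]=[\ngo_\gca,\ngo_\gca]$: an automorphism is determined by its values on any generating set. So the content of the theorem reduces to the three inclusions (i), (ii), (iii). For each of these my plan is to produce operators on $V$ that (a) lie in $G$, (b) are $\ip$-orthogonal, and (c) whose extension to $T$ satisfies the hypothesis of Lemma \ref{extensionOr} (commutation with $g^2$), which then places them in $G_{or}$.

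For (i) I would simply combine (\ref{G0capOr}) with Lemma \ref{extensionOr}: any element of $\prod_i\SO(V_{\lambda_i},\ip)$ is block-diagonal with respect to the coherent decomposition and thus preserves every $V_{\lambda_i}$, so the extension to $T$ is $\ip_1$-orthogonal by the last assertion of Lemma \ref{extensionOr}. For the $D_1(V)$ part of (ii) the same argument works: diagonal matrices with entries $\pm1$ are $\ip$-orthogonal and fix each coordinate line, hence each coherent subspace $V_{\lambda_i}$. For (iii), I would first check $\prod_i\Or(V_{\lambda_i},\ip)\subseteq G$ by using the structural inclusion $\prod_i\Gl^+(V_{\lambda_i})\subseteq G^0$ from the proposition preceding (\ref{matrizG}), multiplied by suitable elements of $D_1(V)\subseteq G$ to produce the orientation-reversing blocks. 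Since such operators are $\ip$-orthogonal, block-diagonal, and preserve each coherent component, Lemma \ref{extensionOr} again applies.

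The genuinely delicate case is $\Sigma(S,E)\subseteq G_{or}$ in (ii), and this is where I expect the only real obstacle. A graph automorphism can send one coherent component to a different one, so the phrase ``leaves the coherent components invariant'' fails literally and I cannot directly invoke the final clause of Lemma \ref{extensionOr}. Instead I would fall back on the general criterion in the lemma: for the extension $\tau$ of $\sigma\in\Sigma(S,E)$, the commutation $\tau g^2=g^2\tau$ reduces via (\ref{tau_g^2}) to showing that whenever $\tau$ sends $e_{p+k}$ to $\pm e_{p+m}$, one has $c_k=c_m$. This is precisely the content of Remark \ref{auto_grafo_preserva} on the invariance of the nilsoliton weights under graph automorphisms. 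Since any permutation is $\ip$-orthogonal, Lemma \ref{extensionOr} then yields $\tau\in\Or(\ngo_\gca,\ip_1)$, and $\Sigma(S,E)\subseteq G_{or}$ follows. All other pieces of the theorem are routine block-diagonal verifications.
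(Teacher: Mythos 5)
Your proposal is correct and follows essentially the same route as the paper: the first equality is the inclusion (\ref{autOr_subset_T}), (i) is (\ref{G0capOr}) plus the coherent-component clause of Lemma \ref{extensionOr}, $D_1(V)$ and $\Sigma(S,E)$ are handled through Lemma \ref{extensionOr} with the weight-uniqueness argument (Remark \ref{auto_grafo_preserva}, i.e.\ the criterion (\ref{tau_g^2})) for graph automorphisms, and (iii) is obtained by combining the $\pm1$-diagonal operators with the special orthogonal blocks. The only (immaterial) difference is in (iii), where the paper uses the index-two identity $D_1(V_{\lambda_i})\SO(V_{\lambda_i},\ip)=\Or(V_{\lambda_i},\ip)$ inside the group $G_{or}$, while you first place $\prod_i\Or(V_{\lambda_i},\ip)$ inside $G$ via $\Gl^+$ and $D_1(V)$ and then reapply Lemma \ref{extensionOr} directly.
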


\begin{proof}
The first identity is (\ref{G0capOr}). In order to prove the inclusion, we notice that an automorphism $\tau$ obtained from an element of $\prod_{i=1}^k \SO(V_{\lambda_i},\ip)$ leaves the coherent components invariant. Hence (i) follows from Lemma \ref{extensionOr}.

An element from $D_1(V)$ gives rise to a (canonical basis) diagonal automorphism, which obviously commutes with $g^2$, and then by Lemma \ref{extensionOr} it is $\ip_1$-orthogonal. To see that $\Sigma(S,E) \subseteq G_{or}$, first take an automorphism $\tau$ obtained from an element $\sigma \in \Sigma(S,E)$. From Lemma \ref{extensionOr} it is sufficient to prove that $\tau_{p+i,p+j} (c_i - c_j) = 0$,   $\forall i,j = 1,\ldots,q$. Let us take $i,j$ and suppose that $\tau_{p+i,p+j} \neq 0$. Since $\tau$ is also a permutation, we have that $\tau(e_{p+j}) = e_{p+i}$. This implies that there exists a graph automorphism of $(S,E)$ that maps the edge $l_i$ to the edge $l_j$. Therefore, by the uniqueness of the weights $\{c_r\}$ that make the graph positive, we must have that $c_i = c_j$ and our identity holds.

Finally, let us write $D_1(V) = \prod_{i=1}^k D_1(V_{\lambda_i})$. Then by (i) and (ii),
\be  \prod_{i=1}^k D_1(V_{\lambda_i}) \SO(V_{\lambda_i},\ip)  \subseteq G_{or}. \ee
The subgroup $\SO(V_{\lambda_i},\ip)$ of $\Or(V_{\lambda_i},\ip)$ has index 2. Since $D_1(V_{\lambda_i}) \nsubseteq \SO(V_{\lambda_i},\ip)$ this forces $D_1(V_{\lambda_i})\SO(V_{\lambda_i},\ip) = \Or(V_{\lambda_i},\ip)$, and the conclusion follows.
\end{proof}

\section{Description of solsolitons associated with positive graphs}\label{desc}

We are now in a position to approach Problem \ref{problema}. Let $\ag\subseteq \Der(\ngo_{\gca})$ be an abelian Lie subalgebra of $\ip_1$-symmetric derivations of $\ngo_{\gca}$. According to Theorem \ref{TeoDerSim}, the elements of $\ag$ leave $V$ invariant, and are determined by their restrictions to that subspace. Furthermore, if $\bg = \{\tau|_V : \tau\in \ag\}\subseteq \ggo_{sym}$, we know that $ \bg \subseteq \bigoplus_{i =1}^k \sym(V_{\lambda_i},\ip)$.
Let us call $\bg_i = \{ A|_{V_{\lambda_i}} : A\in \bg\}$, $i=1,\ldots,k$. Then we have the decomposition $ \bg = \bigoplus_{i=1}^k \bg_i $. Also, let $\dg_{\ngo_{\gca}} = \dg(\ngo_{\gca}) \subseteq \End(\ngo_{\gca})$ be the Lie algebra of (canonical basis) diagonal operators on $\ngo_{\gca}$.

\begin{proposition}\label{Diagonales}
Every abelian Lie algebra $\ag$ of $\ip_1$-symmetric derivations of $\ngo_{\gca}$ is equivalent (in the sense of Problem \ref{problema}) to a Lie algebra $\ag'\subseteq \dg_{\ngo_{\gca}}$.
\end{proposition}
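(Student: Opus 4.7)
The strategy is to reduce the problem to a simultaneous diagonalization of a commuting family of symmetric operators on each coherent-component subspace $V_{\lambda_i}$, via the spectral theorem, and then to lift the resulting block-wise $\ip$-orthogonal change of basis on $V$ to a genuine $\ip_1$-orthogonal automorphism of $\ngo_{\gca}$ using Theorem~\ref{TeoAutOr}(iii).

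First, I would use Theorem~\ref{TeoDerSim} to pass from $\ag \subseteq \Der(\ngo_{\gca})$ to the family $\bg = \{D|_V : D\in\ag\}$ of restrictions, which sits inside $\bigoplus_{i=1}^k \sym(V_{\lambda_i},\ip)$. Writing $\bg = \bigoplus_i \bg_i$ according to the coherent-component decomposition, each $\bg_i$ is commutative (because $\ag$ is abelian) and consists of $\ip$-symmetric operators on $V_{\lambda_i}$. The spectral theorem then yields $P_i \in \Or(V_{\lambda_i},\ip)$ that simultaneously diagonalizes all elements of $\bg_i$ in the canonical basis of $V_{\lambda_i}$. Setting $P = \bigoplus_{i=1}^k P_i \in \prod_i \Or(V_{\lambda_i},\ip)$, Theorem~\ref{TeoAutOr}(iii) places $P$ inside $G_{or}$, so it is the restriction to $V$ of a unique automorphism $h \in \Aut(\ngo_{\gca})\cap \Or(\ngo_{\gca},\ip_1)$. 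I then take $\ag' = h\, \ag\, h^{-1}$ as the claimed equivalent subalgebra.

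To verify that $\ag' \subseteq \dg_{\ngo_{\gca}}$, I observe that by construction every $D' = h D h^{-1}$ is diagonal on $V$ in the canonical basis; writing $D'e_i = d_i e_i$ for $i=1,\dots,p$, the Leibniz rule gives
\[
D' e_{p+k} \;=\; D'[e_i,e_j] \;=\; [D'e_i,e_j] + [e_i,D'e_j] \;=\; (d_i+d_j)\,e_{p+k}
\]
whenever $e_{p+k} = [e_i,e_j]$, so $D'$ is simultaneously diagonal on $[V,V]$ in the canonical basis. Hence $\ag' \subseteq \dg_{\ngo_{\gca}}$, as required.

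The main obstacle is the seemingly innocuous step of promoting the block-wise $\ip$-orthogonal transformation $P$ to an automorphism that is $\ip_1$-orthogonal on the whole $\ngo_{\gca}$ — a priori this is non-trivial because $\ip_1$ and $\ip$ differ on $[V,V]$ by the nilsoliton weights $c_k$. This difficulty is exactly what Theorem~\ref{TeoAutOr}(iii) handles: the fact that $P$ preserves each coherent component ensures, via Lemma~\ref{extensionOr} together with the coincidence of weights on similar edges (Proposition~\ref{mismos_pesos}), that the induced extension commutes with $g^2$ and is therefore $\ip_1$-orthogonal. Once this lifting is secured, the remainder is a formal Leibniz computation.
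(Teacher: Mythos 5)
Your proof is correct and follows essentially the same route as the paper: simultaneous diagonalization of each commuting family $\bg_i$ of $\ip$-symmetric operators on $V_{\lambda_i}$, block-assembly into $r\in\prod_i\Or(V_{\lambda_i},\ip)$, lifting to an $\ip_1$-orthogonal automorphism via Theorem~\ref{TeoAutOr}, and conjugation. Your explicit Leibniz-rule check that the conjugated derivation is diagonal on $[V,V]$ is just a spelled-out version of the step the paper states more briefly, so there is nothing to add.
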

\begin{proof}
Fix $i\in \{1,\ldots,k\}$. Since $\bg_i$ is a set of $\ip$-symmetric, pairwise commuting operators, it is simultaneously diagonalizable, meaning that there exists $r_i\in \Or(V_{\lambda_i},\ip)$ such that $r_i \bg_i r_i^{-1} \subseteq \dg(V_{\lambda_i})$. Now let $r\in \prod_{i=1}^k \Or(V_{\lambda_i}, \ip)$ given by
\be
r = \left[
      \bsm
        r_1 & 0 & \dots & 0 \\
        0 & r_2 & \dots& 0 \\
        \vdots & \vdots & \ddots & \vdots \\
        0 & 0 & \dots & r_k \\
      \esm
    \right].
\ee
It is clear that $r \bg r^{-1} \subseteq \dg(V)$. According to Theorem \ref{TeoAutOr}, there exists $h\in \Aut(\ngo_{\gca})\cap \Or(\ngo_{\gca}, \ip_1)$ such that $h|_V = r$; the proof is completed if we show that $h \ag h^{-1} \subseteq \dg_{\ngo_{\gca}}$. Take $D\in \ag$. Then since $h$ is a Lie automorphism, $h D h^{-1}$ is also a derivation. And this, together with the fact that $(h D h^{-1})|_V = r (D|_V) r^{-1}$ is diagonal, imply that $h D h^{-1}$ is also diagonal (in the canonical basis). This completes the proof.
\end{proof}

\begin{remark}
The fact that $h$ is an $\ip_1$-orthogonal automorphism implies that the Lie algebra $\ag' = h\ag h^{-1}$ is also abelian, and consists of $\ip_1$-symmetric derivations of $\ngo_{\gca}$.
\end{remark}

This reduces our problem to classify subalgebras of $\dg_{\ngo_{\gca}}$, since every equivalence class has a representative  of that kind. Then we have to restrict our attention to the operators $h$ that conjugate $\dg_{\ngo_{\gca}}$ onto itself. Thanks to the following lemma, we can assume that $h$ is a permutation operator.

\begin{lemma}\label{bastaConPerm}
Let $\ag_1 \subseteq \dg_{\ngo_{\gca}}$ be an abelian Lie algebra of $\ip_1$-symmetric derivations of $\ngo_{\gca}$. If $h\in \Aut(\ngo_{\gca})\cap \Or(\ngo_{\gca},\ip_1)$ is such that $\ag_2 = h \ag_1 h^{-1}  \subseteq \dg_{\ngo_{\gca}}$, then there exists a permutation $P \in \Aut(\ngo_{\gca})\cap \Or(\ngo_{\gca},\ip_1)$ such that $\ag_2 = P\ag_1 P^{-1}$.
\end{lemma}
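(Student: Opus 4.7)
The plan is to use that each element of $\ag_1$ acts as a scalar on every common eigenspace of $\ag_1|_V$, so that conjugating $\ag_1$ by any permutation matching the common eigenspace partitions of $\ag_1$ and $\ag_2$ produces the same algebra as conjugating by $h$; the remaining task is then to realize such a permutation by an honest graph automorphism.

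Concretely, let $V = \bigoplus_{j=1}^s U_j$ and $V = \bigoplus_{j=1}^s U_j'$ be the common eigenspace decompositions of $\ag_1|_V$ and $\ag_2|_V$. Since both algebras lie in $\dg(V)$, each eigenspace is a coordinate subspace: $U_j = \mathrm{span}\{e_i : i \in I_j\}$ and $U_j' = \mathrm{span}\{e_i : i \in I_j'\}$ for certain subsets $I_j, I_j' \subseteq \{1,\ldots,p\}$. Because $\ag_2 = h\ag_1 h^{-1}$ and $h|_V$ is invertible, $h|_V$ maps the $\ag_1|_V$-decomposition to that of $\ag_2|_V$, so after a relabeling $h(U_j) = U_j'$ and $|I_j| = |I_j'|$ for every $j$. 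Parametrizing $\ag_1$ by the scalars $\lambda_j^D$ with which each derivation $D \in \ag_1$ acts on $U_j$, and $\ag_2$ similarly on $U_j'$, the two parametrizations coincide. Consequently, for any bijection $\sigma : \{1,\ldots,p\} \to \{1,\ldots,p\}$ with $\sigma(I_j) = I_j'$ for all $j$, the associated permutation operator $P$ on $\ngo_\gca$ satisfies $P\ag_1 P^{-1} = \ag_2$; moreover, provided $\sigma$ extends to a graph automorphism, Theorem~\ref{TeoAutOr} places $P$ in $\Sigma(S,E) \subseteq \Aut(\ngo_\gca) \cap \Or(\ngo_\gca,\ip_1)$. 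So the proof reduces to producing such a graph-automorphic $\sigma$.

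To that end, the key input is that $h$ permutes the coherent components of $\gca$ setwise. Invoking the structure of $G_{or}$ in Theorem~\ref{TeoAutOr} together with the block description of $G$ in (\ref{matrizG}), $h$ can be factored (modulo an element of $\prod_\mu \Or(V_\mu)$, which fixes each $V_\mu$) through a graph automorphism $\rho \in \Sigma(S,E)$, so that $h(V_\mu) = V_{\rho(\mu)}$ for every coherent component $V_\mu$. Intersecting with $h(U_j) = U_j'$ yields orthogonal isomorphisms $U_j \cap V_\mu \to U_j' \cap V_{\rho(\mu)}$, whence $|I_j \cap J_\mu| = |I_j' \cap J_{\rho(\mu)}|$ where $J_\mu$ is the set of vertex indices in $V_\mu$. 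Picking, for each pair $(j,\mu)$, an arbitrary bijection $I_j \cap J_\mu \to I_j' \cap J_{\rho(\mu)}$ and taking their union yields $\sigma$ satisfying both $\sigma(I_j) = I_j'$ and $\sigma(J_\mu) = J_{\rho(\mu)}$. Since any permutation within a single coherent component is a graph automorphism (vertices there share neighborhoods), and the coherent-component assignment $\mu \mapsto \rho(\mu)$ is realized by the genuine graph automorphism $\rho$, the $\sigma$ so built does extend to a graph automorphism, finishing the proof.

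The main obstacle is the structural claim that $h \in G_{or}$ induces a permutation of the coherent components, i.e., that $h$ is (modulo $\prod_\mu \Or(V_\mu)$) a graph automorphism. This is where the analysis of $G_{or}$ from the previous section is essential: the generators listed in Theorem~\ref{TeoAutOr}, namely $\Sigma(S,E)$ and $\prod_\mu \Or(V_\mu)$, all preserve the coherent-component decomposition up to permutation, forcing the same property for every element of $G_{or}$.
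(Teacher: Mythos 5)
Your reduction is fine as far as it goes: since $\ag_1,\ag_2\subseteq\dg_{\ngo_{\gca}}$, their joint eigenspace decompositions of $V$ are by coordinate subspaces, $h|_V$ carries one partition to the other, and any vertex permutation $\sigma$ matching the partitions and extending to a graph automorphism conjugates $\ag_1$ onto $\ag_2$ (diagonal derivations being determined by their vertex eigenvalues). The gap is in the step you yourself flag as the key input: the claim that every $h\in\Aut(\ngo_{\gca})\cap\Or(\ngo_{\gca},\ip_1)$ maps each coherent subspace $V_\mu$ onto some $V_{\mu'}$. You justify it by saying that ``the generators listed in Theorem \ref{TeoAutOr}, namely $\Sigma(S,E)$ and $\prod_\mu\Or(V_\mu)$,'' have this property, hence so does every element of $G_{or}$. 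But Theorem \ref{TeoAutOr} is not a generation statement: it only proves the \emph{inclusions} $\prod_i\SO(V_{\lambda_i},\ip)\subseteq G_{or}$, $D_1(V),\Sigma(S,E)\subseteq G_{or}$ and $\prod_i\Or(V_{\lambda_i},\ip)\subseteq G_{or}$, and nowhere gives an upper bound for, or a generating set of, $G_{or}$ (Proposition \ref{ggo} and the block form (\ref{matrizG}) only describe the identity component $G^0$). So the statement ``every element of $G_{or}$ permutes the coherent components'' — on which your construction of $\sigma$ entirely rests — is unproved, and it is essentially the kind of upper bound on $\Aut(\ngo_{\gca})\cap\Or(\ngo_{\gca},\ip_1)$ that the lemma is meant to deliver; assuming it at the crucial step makes the argument circular in spirit.

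The paper sidesteps exactly this difficulty by a different mechanism: it regards $H=\{h\in\Aut(\ngo_{\gca}):h^t\in\Aut(\ngo_{\gca})\}$ as a real reductive group with maximal compact subgroup $K=\Aut(\ngo_{\gca})\cap\Or(\ngo_{\gca},\ip_1)$ and Cartan decomposition $\kg\oplus\pg$, observes that $\dg_{\ngo_{\gca}}$ is a maximal abelian subalgebra of $\pg$, and invokes the classical fact (\cite[Ch.~VII, Prop.~2.2]{H}) that subalgebras of $\dg_{\ngo_{\gca}}$ which are $K$-conjugate are already conjugate under the Weyl group $W=N_K(\dg_{\ngo_{\gca}})/Z_K(\dg_{\ngo_{\gca}})$; conjugating a diagonal derivation with pairwise distinct entries then shows $N_K(\dg_{\ngo_{\gca}})$ consists of permutations times diagonal elements, so $W$ is realized by permutations in $K$, which by Lemma \ref{bastaConAut} come from graph automorphisms. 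That route never needs a complete description of $K$. To salvage your more elementary argument you would have to prove the structural claim independently — e.g., show directly that an $\ip_1$-orthogonal automorphism leaving $V$ invariant permutes the $V_\mu$'s, or quote a full description of $\Aut(\ngo_{\gca})$ from \cite{DM} — after which the remainder of your construction (matching the eigenspace partitions component by component, and noting that any vertex bijection inducing a graph-automorphic permutation of coherent components is itself a graph automorphism) is sound.
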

\begin{proof}
The group $H = \{h\in \Aut(\ngo_{\gca}) : h^t \in  \Aut(\ngo_{\gca}) \}$ is a real reductive algebraic group (see \cite[Ch. 2]{W}, or \cite[Ch.VII, $\S 2$]{K}). The Cartan decomposition of its Lie algebra $\hg = \{ A\in \Der(\ngo_{\gca}) : A^t \in \ngo_{\gca} \}$ is given by $\hg = \kg \oplus \pg$, where $\kg= \Der(\ngo_{\gca})\cap\so(\ngo_{\gca},\ip_1)$ and $\pg=\Der(\ngo_{\gca})\cap\sym(\ngo_{\gca},\ip_1)$. The subgroup $K = \Aut(\ngo_{\gca}) \cap \Or(\ngo_{\gca},\ip_1)$ is a maximal compact subgroup of $H$, with Lie algebra $\kg$. It is clear that $\dg_{\ngo_{\gca}}$ is a maximal abelian Lie subalgebra, and by Proposition \ref{Diagonales} every such Lie algebra is obtained by conjugacy of $\dg_{\ngo_{\gca}}$. If we fix $\dg_{\ngo_{\gca}}$, the restricted roots, the Weyl chambers and the Weyl group $W = N_K(\dg_{\ngo_{\gca}})/Z_K(\dg_{\ngo_{\gca}})$ are defined. $W$ is a finite group, and acts simply transitively on the set of Weyl chambers. We also know that if two Lie subalgebras $\ag_1, \ag_2 \subseteq \dg_{\ngo_{\gca}}$ are conjugate by an element of $K$, then they are also conjugate by an element of $W$ (see \cite[Ch.VII, Prop. 2.2]{H}). The proof will be finished if we show that $W$ is the group of permutations that are in $K$.

It is clear that $Z_K(\dg_{\ngo_{\gca}}) = \dg_{\ngo_{\gca}}$. Now take $T\in N_K(\dg_{\ngo_{\gca}})$, and $D_1\in \dg_{\ngo_{\gca}}$ having pairwise different entries in its diagonal. From $T D_1 T^{-1}\in \dg_{\ngo_{\gca}}$ it follows that the (canonical basis) matrix representation of $T$ has at most one non-negative entry in each row and column. Hence, $N_K(\dg_{\ngo_{\gca}}) = \{PD : P\in K \,\hbox{permutation, }\, D\in \dg_{\ngo_{\gca}} \}$. Therefore $W$ is precisely the group we wanted, and this finishes the proof.
\end{proof}

Not every permutation $P$ is a $\ip_1$-orthogonal automorphism of $\ngo_{\gca}$. The following lemma gives us the conditions $P$ must satisfy.

\begin{lemma}\label{bastaConAut}
Given a permutation $P\in \End(\ngo_{\gca})$, we have that $P\in \Aut(\ngo_{\gca})\cap \Or(\ngo_{\gca},\ip_1)$ if and only if $P$ leaves $V$ invariant, and acts on the canonical basis of that subspace as a graph automorphism.
\end{lemma}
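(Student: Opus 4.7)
The plan is to prove the two implications separately, drawing on Theorem \ref{TeoAutOr}, Lemma \ref{extensionOr}, Remark \ref{auto_grafo_preserva} and Proposition \ref{mismos_pesos}. For the forward implication, assume $P\in\Aut(\ngo_\gca)\cap\Or(\ngo_\gca,\ip_1)$. By the inclusion (\ref{autOr_subset_T}), $P\in T$, so $P(V)=V$. Being a permutation of the canonical basis that preserves $V$, $P$ restricts on $V$ to a permutation $\sigma$ of $\{e_1,\ldots,e_p\}$. To check that $\sigma$ is a graph automorphism, I would use that $P$ preserves the bracket: for $i<j\le p$, $P[e_i,e_j]=[Pe_i,Pe_j]=[\sigma(e_i),\sigma(e_j)]$, and nonsingularity of $P$ gives $[e_i,e_j]\ne 0$ iff $[\sigma(e_i),\sigma(e_j)]\ne 0$; by definition of the bracket in $\ngo_\gca$ this translates into $e_ie_j\in E\iff\sigma(e_i)\sigma(e_j)\in E$, which is the graph-automorphism condition on $\sigma$.

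For the backward implication, suppose $P$ is a permutation with $P(V)=V$ and $\sigma=P|_V$ a graph automorphism. Since $P$ preserves $V$, it also permutes the edge basis $\{e_{p+1},\ldots,e_n\}$. To verify the Lie-automorphism identity $P[e_i,e_j]=[Pe_i,Pe_j]$, the only non-trivial case is $i<j\le p$ with $e_ie_j=l_k\in E$; the graph-automorphism property of $\sigma$ ensures that $\sigma(e_i)\sigma(e_j)$ is again an edge $l_{k'}$, so $[Pe_i,Pe_j]=\pm e_{p+k'}$, with the sign dictated by the ordering convention. This must equal $P(e_{p+k})$, and once the action of $P$ on the edge basis is matched (up to signs) with the permutation induced by $\sigma$, the identity holds and $P\in\Aut(\ngo_\gca)$. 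To then obtain $\ip_1$-orthogonality I would invoke Lemma \ref{extensionOr}: $P|_V$ is a vertex permutation, hence $\ip$-orthogonal on $V$, and the remaining commutation $Pg^2=g^2P$ reduces by (\ref{tau_g^2}) to $c_i=c_j$ whenever the edges $l_i,l_j$ are interchanged by $\sigma$, which is precisely Proposition \ref{mismos_pesos} together with Remark \ref{auto_grafo_preserva}.

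The main obstacle is the sign-bookkeeping in the backward implication: one must identify the action of $P$ on the edge basis with the edge permutation induced by $\sigma$, while tracking the signs coming from the convention $[e_i,e_j]=e_{p+k}$ for $i<j$. Once this identification is set up, the $\ip_1$-orthogonality then follows cleanly from Lemma \ref{extensionOr} together with the uniqueness of the nilsoliton weights under graph automorphisms, exactly the content of Remark \ref{auto_grafo_preserva}.
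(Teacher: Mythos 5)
Your proposal is correct and follows essentially the same route as the paper: the forward direction is the paper's ``necessity is clear from the definition of the bracket'' (your use of (\ref{autOr_subset_T}) plus bracket-preservation is that same observation), and your backward argument simply unpacks Theorem \ref{TeoAutOr}(ii), which the paper cites and whose proof is exactly your combination of Lemma \ref{extensionOr} with the invariance of the nilsoliton weights under graph automorphisms. The sign bookkeeping you flag (the automorphism extending $\sigma$ may send an edge vector to $-e_{p+k'}$ when $\sigma$ reverses the order of its endpoints) is glossed over in the paper as well and is harmless for the application in Theorem \ref{TeoMain}.
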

\begin{proof}
By definition of the Lie bracket in $\ngo_{\gca}$, the necessity is clear. On the other hand, the converse is precisely Theorem \ref{TeoAutOr}, (ii).
\end{proof}

We are now in a position to prove the main result of this paper. To simplify notation, we call $\diag(v) = \diag(v_1,\ldots,v_m)$ the operator in $\End(\RR^m)$ whose matrix in canonical basis is diagonal, and its diagonal entries are the $v_i$. It is clear that $\diag : \RR^m \rightarrow \dg_m$ is a linear isomorphism. Also, if $\sigma\in \SSS_m$ and $v\in \RR^m$, we denote $\sigma(v) = (v_{\sigma(1)}, \ldots, v_{\sigma(m)})$. Clearly, $\diag(\sigma(v)) = P \diag(v) P^{-1}$, where $P\in \End(\RR^m)$ is the permutation operator associated with $\sigma$.

\begin{theorem}\label{TeoMain}
Given a positive graph $\gca = (S,E)$ with $p$ vertices and $q$ edges, the solsolitons associated with it are parameterized by the vector subspaces of $\RR^p$, and two such subspaces $\sca$, $\sca'$ give rise to isometric solsolitons if and only if there exists $\sigma \in \Aut(\gca)$ such that $\sigma(\sca) = \sca'$. The parametrization is according to the following construction:

If $\ngo_{\gca}$ is the 2-step nilpotent Lie algebra associated with $\gca$, in which a unique (up to orthogonal automorphisms and scaling) nilsoliton metric $\ip_1$ can be defined, then for each subspace $\sca \subseteq \RR^p$ we define a solsoliton $S$ with Lie algebra $\sg = \ag \oplus \ngo_{\gca}$ as in Proposition \ref{constr_solsolitons}, where $\ag$ is the abelian Lie algebra of $\ip_1$-symmetric derivations of $\ngo_{\gca}$ obtained from $\bg = \diag(\sca) \subseteq \End(V)$.
\end{theorem}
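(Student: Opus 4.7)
My plan is to combine the reductions developed in Sections \ref{herra} and \ref{desc} in three steps: verify that the construction is well-defined, establish surjectivity (every solsoliton with nilradical $\ngo_\gca$ arises from some subspace $\sca$), and identify the residual isometry relation with the $\Aut(\gca)$-action on the Grassmannian.

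For the first two steps, given $\sca \subseteq \RR^p$, the subspace $\bg = \diag(\sca)$ consists of pairwise commuting operators lying in $\bigoplus_{\lambda \in \Lambda} \sym(V_\lambda, \ip)$, so by Theorem \ref{TeoDerSim} each element of $\bg$ extends uniquely to an $\ip_1$-symmetric derivation of $\ngo_\gca$. The resulting family $\ag$ is abelian, since its elements are determined by their pairwise commuting restrictions to $V$, and Proposition \ref{constr_solsolitons} then produces a solsoliton. Conversely, the structure theorem stated right after Proposition \ref{constr_solsolitons} says every solsoliton with nilradical $\ngo_\gca$ arises from some abelian Lie algebra of $\ip_1$-symmetric derivations, and Proposition \ref{Diagonales} shows that up to isometry such an algebra can be taken inside $\dg_{\ngo_\gca}$; its restriction to $V$ is then of the form $\diag(\sca)$ for a unique subspace $\sca \subseteq \RR^p$.

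For the isometry classification, Proposition \ref{EquivSolsolitons} says that the solsolitons produced by two diagonal algebras $\ag_1, \ag_2 \subseteq \dg_{\ngo_\gca}$ are isometric iff $\ag_2 = h\ag_1 h^{-1}$ for some $h$ in $K := \Aut(\ngo_\gca) \cap \Or(\ngo_\gca, \ip_1)$. Lemma \ref{bastaConPerm} reduces this to the case in which $h = P$ is a permutation in $K$, and Lemma \ref{bastaConAut} identifies such permutations with the canonical extensions to $\ngo_\gca$ of graph automorphisms $\sigma \in \Aut(\gca)$ acting on $V$ by permuting the canonical basis. A direct computation then gives $P\, \diag(v)\, P^{-1} = \diag(\sigma(v))$ (up to conventional inversion of $\sigma$), so $P\, \diag(\sca_1)\, P^{-1} = \diag(\sigma(\sca_1))$, translating the conjugacy on subalgebras of $\dg_{\ngo_\gca}$ into the stated $\Aut(\gca)$-action on subspaces of $\RR^p$.

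The heavy lifting has already been done in the preceding sections; what remains is essentially bookkeeping to ensure the equivariance between the $K$-action by conjugation on abelian subalgebras of $\dg_{\ngo_\gca}$ and the $\Aut(\gca)$-action on $\Gr_r(\RR^p)$. The most delicate point---that not every element of $K$ preserving $\dg_{\ngo_\gca}$ is a permutation, yet every conjugacy between diagonal subalgebras can be realized by one---has already been addressed by the Weyl-group argument in Lemma \ref{bastaConPerm}, so here it only needs to be invoked cleanly.
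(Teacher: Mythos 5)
Your proposal is correct and follows essentially the same route as the paper: reduce to diagonal algebras via Proposition \ref{Diagonales}, identify them with subspaces of $\RR^p$ through the linear isomorphism $\diag$, and convert the conjugacy criterion of Proposition \ref{EquivSolsolitons} into the $\Aut(\gca)$-action using Lemmas \ref{bastaConPerm} and \ref{bastaConAut}. The extra care you take in checking that $\diag(\sca)$ lies in $\ggo_{sym}$ and extends to an abelian algebra of $\ip_1$-symmetric derivations is left implicit in the paper but is a welcome addition.
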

\begin{proof}
Given the Lie algebra $\ag$, using Proposition \ref{Diagonales} we may suppose without any lose of generality that $\ag \subseteq \dg_{\ngo_{\gca}}$. $\ag$ is determined by the restrictions of its elements to $V$, and this set of restrictions $\bg \subseteq \dg_{p}$ is in itself a Lie algebra of operators of $\End(V)$. It is clear that $\bg = \diag(\sca)$, for some subset $\sca \subseteq \RR^p$. Since $\diag$ is a linear isomorphism, $\sca$ is a vector subspace.

Now if $\ag, \ag' \subseteq \dg_{\ngo_{\gca}}$ are two Lie algebras of $\ip_1$-symmetric derivations of $\ngo_{\gca}$, from Proposition \ref{EquivSolsolitons} they give rise to isometric solsolitons if and only if there exists $h\in \Aut(\ngo_{\gca})\cap \Or(\ngo_{\gca},\ip_1)$ such that $\ag = h\ag h^{-1}$. Using Lemmas \ref{bastaConPerm} and \ref{bastaConAut} we may suppose that $h = P_{\sigma}$, where $P_{\sigma}$ is a permutation associated to a graph automorphism $\sigma$. We write $\ag = \diag(\sca)$, $\ag' = \diag(\sca')$, and then the conjugacy condition is rewritten as $\sigma(\sca) = \sca'$. The proof is now complete.
\end{proof}

\begin{example} We end this section by developing in detail all what has been seen in this paper for the following graph $\gca$:
\setlength{\unitlength}{.3cm}

\begin{picture}(9,3.5)(-13,2)
\put(0.15,0.2){\line(3,4){2.65}}  
\put(3.15,3.8){\line(3,-4){2.65}} 
\put(0.25,0){\line(1,0){5.5}}      
\put(6.25,0){\line(1,0){5.5}}
\put(0,0){\circle*{.5}} 
\put(3,4){\circle*{.5}} 
\put(6,0){\circle*{.5}} 
\put(12,0){\circle*{.5}} 
\put(-1,-1){$e_1$}
\put(3,4.5){$e_2$}
\put(6,-1){$e_3$}
\put(12,-1){$e_4$}
\put(4.7,2){$e_5$}
\put(2.7,.3){$e_6$}
\put(.5,2){$e_7$}
\put(8.7,.3){$e_8$}
\\

\end{picture}
\vskip30pt

We have labeled the vertices and edges according to the elements they represent in the canonical basis of $\RR^8$. The graph $\gca$ is clearly positive, with weights
\setlength{\unitlength}{.3cm}
\begin{picture}(10,5.5)(-10,-1)
\put(0.15,0.2){\line(3,4){2.65}}  
\put(3.15,3.8){\line(3,-4){2.65}} 
\put(0.25,0){\line(1,0){5.5}}      
\put(6.25,0){\line(1,0){5.5}}
\put(0,0){\circle*{.5}} 
\put(3,4){\circle*{.5}} 
\put(6,0){\circle*{.5}} 
\put(12,0){\circle*{.5}} 
\put(5,2){$\frac16$}
\put(2.7,.5){$\frac16$}
\put(0.5,2){$\frac13$}
\put(8.7,.5){$\frac13$}
\end{picture}

That is, $\textbf{c} = \left(\frac16,\frac16,\frac13,\frac13\right)$ is a solution to (\ref{positivo}) with $\sum_{i=1}^4 c_i = 1$ ($c_i$ represents the weight corresponding to the edge $e_{i+4}$). Therefore we can construct a nilsoliton $\ngo_{\gca} = (\RR^8, [\cdot,\cdot], \ip_1)$ associated with $\gca$. According to (\ref{def_g}), the nilsoliton inner product $\ip_1$ is given by
$$\la x,x \ra_1 = x_1^2 + x_2^2 + x_3^2 + x_4^2 + 6 x_5^2 + 6x_6^2 + 3x_7^2 + 3x_8^2, \qquad x\in \RR^8.$$

It is easy to see that the coherent components of the graph are precisely $S_1 = \{e_1,e_2 \}$, $S_2 = \{e_3\}$, $S_3 = \{ e_4\}$, and that the automorphism group is $\Aut(\gca) \simeq \ZZ_2$ (we can only interchange $e_1$ and $e_2$).

Regarding the nilsoliton $\ngo_\gca$, we have the following information: $\Der(\ngo_\gca)$ can be decomposed as $\ug \oplus \tgo$, according to Proposition \ref{descDer}. Clearly, $\ug \simeq \End(\RR^4)$, and $\tgo$ is determined by (and isomorphic to) the Lie algebra $\ggo$, restricting the operators to $V$. Fixing the ordered basis $\{e_4,e_1,e_2,e_3\}$ for $V$, it can be seen that
$$\ggo = \left\{
\left[
      \bsm
        a_{11} & 0      & 0      & 0        \\
        0      & a_{22} & a_{23} & 0        \\
        0      & a_{32} & a_{33} & 0        \\
        0      & 0      & 0      & a_{44}   \\
      \esm
    \right] : a_{ij} \in \RR
 \right\} \oplus
 \left\{
\left[
      \bsm
        0      & a_{12} & a_{13} & a_{14}   \\
        0      & 0 & 0 & a_{24}        \\
        0      & 0 & 0 & a_{34}        \\
        0      & 0      & 0      & 0   \\
      \esm
    \right] : a_{ij} \in \RR
 \right\},
 $$
is the decomposition $\ggo = \qg \oplus \mg$ from Proposition \ref{ggo}. It is now clear that the set $\ggo_{sym}$, that determines the $\ip_1$-symmetric derivations of $\ngo_\gca$, is precisely
$$ \ggo_{sym} = \left\{
\left[
      \bsm
        a_{11} & 0      & 0      & 0        \\
        0      & a_{22} & a_{23} & 0        \\
        0      & a_{32} & a_{33} & 0        \\
        0      & 0      & 0      & a_{44}   \\
      \esm
    \right] : a_{ij} \in \RR, a_{23} = a_{32}
 \right\},$$
as stated in Theorem \ref{TeoDerSim}. Similarly, we can compute the automorphism group $\Aut(\ngo_{\gca})$. We know from Section \ref{autOrSec} that $\Aut(\ngo_\gca) = T \ltimes U$, with $U \simeq \End(\RR^4)$. Moreover, $T \simeq G$ (taking restrictions to $V$), and in terms of the basis previously fixed for $V$ we see that
$$G = \left\{
\left[
      \bsm
        a_{11} & 0         & 0        \\
        0      & B & 0             \\
        0      & 0         & a_{44}   \\
      \esm
    \right] : a_{11}, a_{44} \neq 0,  B\in \Gl(2,\RR)
 \right\} \ltimes
 \left\{
\left[
      \bsm
        1      & a_{12} & a_{13} & a_{14}   \\
        0      & 1 & 0 & a_{24}        \\
        0      & 0 & 1 & a_{34}        \\
        0      & 0      & 0      & 1   \\
      \esm
    \right] : a_{ij} \in \RR
 \right\},
 $$
hence
$$G\cap \Or(V,\ip) = \left\{
\left[
      \bsm
        a_{11} & 0         & 0        \\
        0      & B & 0             \\
        0      & 0         & a_{44}   \\
      \esm
    \right] : a_{11}, a_{44} \in \{\pm 1\},  B\in \Or(2,\RR)
 \right\},
$$
so using Theorem \ref{TeoAutOr} we can conclude that $G_{or} = G\cap \Or(V,\ip)$.

In order to study solsolitons obtained from the nilsoliton $\ngo_{\gca}$ we would have to find the abelian Lie algebras of $\ip_1$-symmetric derivations of $\ngo_{\gca}$. And to classify them up to conjugation by an $\ip_1$-orthogonal automorphism of $\ngo_{\gca}$, in the light of Theorem \ref{TeoMain} we are reduced to consider the vector subspaces of $\RR^4$ modulo $\ZZ_2$ (with $\ZZ_2$ acting on the subspaces of $\RR^4$ by permutating the first two coordinates, taking $\{e_1, e_2,e_3,e_4 \}$ as the ordered basis).

For instance, if $\dim \ag = 1$, we have to consider one-dimensional subspaces $\sca$ of $\RR^4$. We may write $\sca = \RR v$, for some $v\in \RR^4$. $v$ gives rise to a symmetric derivation $A$ of $\ngo_{\gca}$ whose matrix with respect to the canonical basis is $\diag(v_1,v_2,v_3,v_4, v_2+v_3, v_1+v_3, v_1+v_2, v_3+v_4)$. The solsoliton thus obtained is $\sg = \RR A \oplus \ngo_{\gca} $ (semidirect product), where the bracket is extended by letting $\ad A$ act as $A$ on $\ngo_{\gca}$. The inner product is defined according to Proposition \ref{constr_solsolitons}.

In the case $\dim \ag = 2$, the isometry classes of solsolitons associated with $\gca$ are parameterized by $\Gr_2(\RR^4)/\ZZ_2$; we obtain thus a $4$-parameter family of non-isometric solsolitons (recall that $\dim \Gr_k(\RR^n) = k(n-k)$). As in the first case, given $\sca \in \Gr_2(\RR^4)$ we take a basis $\{ v_1, v_2\}$ of it, and $v_i$ determines a diagonal derivation $A_i$ of $\ngo_{\gca}$. The solsoliton is then $\sg =  \RR A_1 \oplus \RR A_2 \oplus \ngo_{\gca} $, with $\ad A_i |_{\ngo_\gca} = A_i$.

Finally, the case $\dim \ag = 3$ is very similar to those mentioned before. And the cases $\dim \ag = 0$ or $4$ give rise to a unique solsoliton (up to isometry) each, since we have no choice for the subspace.

\end{example}

\section{Some examples of positive graphs}\label{ejemplos}

In this section we aim to classify connected graphs with up to three coherent components, according to positivity. As a consequence, we obtain many new examples of positive graphs, leading to examples of solsolitons as well. For other examples of positive and non-positive graphs we refer the reader to \cite{LauWill}.

This classification relies mainly on Proposition \ref{mismos_pesos}, and on the idea of representing a graph by its coherence graph. For example, let $\gca$ be a connected graph having one coherent component. According to the properties mentioned about the coherent components, we see that $\gca$ is a complete graph. Then it is clearly positive, a solution to (\ref{positivo}) being the vector $(1,\ldots,1)$. Moreover, $(1,\ldots,1)$ is a solution to (\ref{positivo}) if and only if every edge of the graph has the same number of adjacent edges, that is, its line graph is a \emph{regular} graph.

We represent a graph via its coherence graph. Each circle represents a coherent component, being black if the correspondent component is a complete graph, and white if it is discrete. The existence of an edge joining two circles represents the fact that every edge joining vertices between those coherent components is present in the original graph. Finally, the natural number near to each coherent component is the number of vertices that it contains.

\subsection{Two coherent components} We have the following cases:

\setlength{\unitlength}{.2cm}

\subsubsection{}
\begin{picture}(9,1)(-2,-1)
\put(0.5,0){\line(1,0){5}}      
\put(0,0){\circle{1}} 
\put(6,0){\circle{1}} 
\put(-2,-1){r}
\put(7.5,-1){s}
\end{picture}

This is the case of a complete bipartite graph. It is positive for every $r,s$, since it is regular.

\subsubsection{}
\begin{picture}(9,1)(-2,-1)
\put(0.5,0){\line(1,0){5}}      
\put(0,0){\circle{1}} 
\put(6,0){\circle*{1}} 
\put(-2,-1){r}
\put(7.5,-1){s}
\end{picture}

We denote $S_1$, $S_2$ the coherent components with $r$, $s$ vertices, respectively. By Proposition \ref{mismos_pesos}, there are only two possibly different edge weights in this case: $a$, for the edges joining $S_1$ with $S_2$, and $b$, for the edges inside $S_2$. In the system (\ref{positivo}), equations corresponding to similar edges are identical. Hence we can rewrite (\ref{positivo}) as
\be \left\{
\begin{array}{r}
(r+s+1)a + (s-1)b = \nu \\
2ra + (2s-1)b = \nu
\end{array}
\right. \ee
If we call $A$ the matrix of the system, we have $\det(A) >0 $ for every $r$, $s$. Fix any $\nu >0$. The solution is given by
\be \left[ \begin{array}{c} a \\  b \\ \end{array} \right] = \frac{\nu}{\det(A)} \left[ \begin{array}{c} s \\  1-r+s \\ \end{array} \right]
\ee
and $a,b$ are positive if and only if $1-r+s>0$. Therefore, $\gca$ is positive if and only if $s\geq r$.

\begin{remark}
Note that to avoid repetition we omit the case `black-black', since those graphs actually have one coherent component. For the same reason, we will also omit many cases when studying the case of $3$ components.
\end{remark}

\subsection{Three coherent components} Working in a similar way we obtain necessary and sufficient conditions, in terms of the number of vertices of the coherent components, for a graph of this kind to be positive. We present here only one case, of a total of six, the others being completely analogous.

\subsubsection{}
\begin{picture}(9,5.5)(-2,2)
\put(0.3,0.4){\line(3,4){2.4}}  
\put(3.3,3.6){\line(3,-4){2.4}} 
\put(0.5,0){\line(1,0){5}}      
\put(0,0){\circle{1}} 
\put(3,4){\circle{1}} 
\put(6,0){\circle{1}} 
\put(-2,-1){r}
\put(7.5,-1){s}
\put(3,5.5){t}
\end{picture} \\
\\

Let us call $S_1$, $S_2$, $S_3$ the coherent components with $r, s, t$ vertices, respectively. This case has only three different weights, $a,b,c$, corresponding respectively to the edges joining $S_1$ and $S_2$, $S_2$ and $S_3$, $S_3$ and $S_1$. Simplifying (\ref{positivo}) we obtain
\be \left\{
\begin{array}{r}
(r+s+1)a + tb + tc = \nu \\
ra + (s+t+1)b + rc = \nu \\
sa + sb + (r+t+1)c = \nu
\end{array}
\right. \ee
hence the matrix of the system is
\be A = \left[
          \begin{array}{ccc}
            r+s+1 & t & t \\
            r & s+t+1 & r \\
            s & s & r+t+1 \\
          \end{array}
        \right]
\ee
and then the solution is the vector
\be
\left[ \begin{array}{c} a \\  b \\ c \\ \end{array} \right] =
\frac{\nu}{\det(A)} \left[
      \begin{array}{c}
        {(1 + r + s - t) (1 + t)}\\
        {(1 + r) (1 - r + s + t)} \\
        {(1 + s) (1 + r - s + t)} \\
      \end{array}
    \right].
\ee
Since $\det(A)>0$ in any case, we have that $a,b,c > 0$ if and only if
\be r+s \geq t,\,\,\,\,\,\,\, s+t \geq r,\,\,\, \hbox{ y } \,\,\,\,\,\, t+r\geq s\ee

\vskip10pt
We thus obtain the classification of connected graphs with at most 3 coherent components. Concerning the non-connected graphs with that number of coherent components, we have to say that the problem is reduced to study each connected component, which have less coherent components and therefore lie in the cases we have already considered. There is an exception to this, and it is the case of a discrete and isolated coherent component. However, we do not mention it in the classification because these cases have no edges, and so no system (\ref{positivo}) to solve (still, they are positive). To summarize, we present the obtained information in Table \ref{tabla}.
\setlength{\unitlength}{.15cm}

\begin{table}

\begin{tabular}{|c|c|c|c|c|c|}
\hline
  \textbf{Graph}  & \textbf{CC}  &   \textbf{$p$} &  \textbf{$q$} &  \textbf{Positive iff}   \\
\hline \hline
 \begin{picture}(9,3)(-2,-1)
\put(3,0){\circle*{1}} 
\put(0.5,-1){r}
\end{picture}  &  $1$ &  $r$ &  $\binom{r}{2}$ &  always  \\
\hline
 \begin{picture}(9,3)(-2,-1)
\put(0.5,0){\line(1,0){5}}      
\put(0,0){\circle{1}} 
\put(6,0){\circle{1}} 
\put(-2,-1){r}
\put(7.5,-1){s}
\end{picture}  &  $2$ &  $r+s$  & $rs$  &  always   \\
\hline
 \begin{picture}(9,3)(-2,-1)
\put(0.5,0){\line(1,0){5}}      
\put(0,0){\circle{1}} 
\put(6,0){\circle*{1}} 
\put(-2,-1){r}
\put(7.5,-1){s}
\end{picture}  &  $2$ &  $r+s$ &  $rs+\binom{s}{2}$  & $s>r$   \\
\hline
 \begin{picture}(10,5)(-2,0)
\put(0.3,0.4){\line(3,4){2.4}}  
\put(3.3,3.6){\line(3,-4){2.4}} 
\put(0.5,0){\line(1,0){5}}      
\put(0,0){\circle{1}} 
\put(3,4){\circle{1}} 
\put(6,0){\circle{1}} 
\put(-2,-0.5){r}
\put(7.5,-0.5){s}
\put(4.5,3.5){t}
\end{picture} &  $3$ &  $r+s+t$  & $rs+st+tr$  &  $ r+s\geq t,\,\, s+t\geq r,\,\, t+r\geq s $ \\
\hline
 \begin{picture}(10,5)(-2,0)
\put(0.3,0.4){\line(3,4){2.4}}  
\put(3.3,3.6){\line(3,-4){2.4}} 
\put(0.5,0){\line(1,0){5}}      
\put(0,0){\circle{1}} 
\put(3,4){\circle*{1}} 
\put(6,0){\circle{1}} 
\put(-2,-0.5){r}
\put(7.5,-0.5){s}
\put(4.5,3.5){t}
\end{picture}  &  3 & $r+s+t$ & $rs+st+tr+\binom{t}{2}$  & $1+t > |r-s|$    \\
\hline
 \begin{picture}(10,5)(-2,0)
\put(3.3,3.6){\line(3,-4){2.4}} 
\put(0.5,0){\line(1,0){5}}      
\put(0,0){\circle{1}} 
\put(3,4){\circle*{1}} 
\put(6,0){\circle{1}} 
\put(-2,-0.5){r}
\put(7.5,-0.5){s}
\put(4.5,3.5){t}
\end{picture}  &  3 &  $r+s+t$ &  $rs+st+\binom{t}{2}$ & ${r + t(1 - r + s)} > 0, \,\,   t+r \geq s$   \\
\hline
 \begin{picture}(10,5)(-2,0)
\put(3.3,3.6){\line(3,-4){2.4}} 
\put(0.5,0){\line(1,0){5}}      
\put(0,0){\circle{1}} 
\put(3,4){\circle*{1}} 
\put(6,0){\circle*{1}} 
\put(-2,-0.5){r}
\put(7.5,-0.5){s}
\put(4.5,3.5){t}
\end{picture} &  3 & $r+s+t$  &  $rs+st+\binom{s}{2}+ \binom{t}{2}$ &  $( s+t) (s-r) >(r-1)(t-1)$  \\
\hline
 \begin{picture}(10,5)(-2,0)
\put(3.3,3.6){\line(3,-4){2.4}} 
\put(0.5,0){\line(1,0){5}}      
\put(0,0){\circle*{1}} 
\put(3,4){\circle*{1}} 
\put(6,0){\circle{1}} 
\put(-2,-0.5){r}
\put(7.5,-0.5){s}
\put(4.5,3.5){t}
\end{picture}  &  3 &  $r+s+t$  & $rs+st+\binom{r}{2}+ \binom{t}{2}$  &  $r+t\geq s$   \\
\hline
 \begin{picture}(10,5)(-2,0)
\put(3.3,3.6){\line(3,-4){2.4}} 
\put(0.5,0){\line(1,0){5}}      
\put(0,0){\circle*{1}} 
\put(3,4){\circle*{1}} 
\put(6,0){\circle*{1}} 
\put(-2,-0.5){r}
\put(7.5,-0.5){s}
\put(4.5,3.5){t}
\end{picture}  &  3 & $r+s+t$  & $rs+st+\binom{r}{2}+ \binom{s}{2}+\binom{t}{2}$  &  always   \\
\hline
\end{tabular}
\vskip5pt
\caption{Positivity of graphs with up to 3 coherent components.}\label{tabla}
\end{table}



\begin{thebibliography} {MM}


\bibitem[C]{libro}  {\sc B. Chow, S.-C. Chu, D. Glickenstein, C. Guenther, J. Isenberg, T, Ivey, D. Knopf, P. Lu, F. Luo, L. Ni}, The Ricci flow: Techniques and Applications, Part I: Geometric Aspects, {\it AMS Math. Surv. Mon.} {\bf 135} (2007), Amer. Math. Soc., Providence.

\bibitem[DM]{DM} {\sc S. G. Dani, M. Mainkar}, Anosov automorphisms on compact nilmanifolds associated with graphs, {\it Trans. Amer. Math. Soc.} 357 (2004), 2235\--2251.

\bibitem[H]{H} {\sc S. Helgason}, Differential geometry, Lie groups, and symmetric spaces. {\it Academic Press}, (1978).


\bibitem[K]{K} {\sc A. Knapp}, Lie Groups Beyond an Introduction, {\it Springer Verlag}, (2002).

\bibitem[L1]{cruzchica}  {\sc J. Lauret}, Einstein solvmanifolds and nilsolitons (survey), {\it Contemporary Math.}, Vol. 491, 1-35. 

\bibitem[L2]{solsolitons} \bysame, Ricci soliton solvmanifolds, {\it Journal f\.ur die reine und angewandte Mathematik}, in press, {\it arXiv:} math/10020384v1 [math.DG]

\bibitem[LW]{LauWill} {\sc J. Lauret, C. E. Will}, Einstein solvmanifolds: existence and non-existence questions, {\it Math. Annalen}, in press, {\it arXiv:} math/0602502v3 [math.DG]


\bibitem[W]{W}  {\sc N. Wallach}, Real reductive groups I, {\it Academic Press}, (1988).

\bibitem[Wi]{wi} C. E. Will, The space of solsolitons in low dimensions, preprint 2010 (arXiv).

\end{thebibliography}
\end{document}